\numberwithin{equation}{section}
\newtheorem{prop}{Proposition}
\newtheorem{theo}[prop]{Theorem}
\newtheorem{lemm}[prop]{Lemma}
\newtheorem{coro}[prop]{Corollary}
\newtheorem*{LST}{Theorem (Local Surjectivity Theorem)}
\newtheorem*{LMthm}{Theorem (Lax--Milgram Theorem)}
\newtheorem*{FAthm}{Theorem (Fredholm alternative)}
\theoremstyle{definition}
\newtheorem*{pflemism}{Proof of Lemma~\ref{le:ism}}
\newtheorem*{pfletori}{Proof of Lemma~\ref{le:tori}}
\newtheorem*{pfletoriext}{Proof of Lemma~\ref{le:toriext}}
\newtheorem{defi}[prop]{Definition}
\newtheorem*{ack}{Acknowledgment}
\theoremstyle{remark}
\newcommand{\p}{\partial}
\newcommand{\pp}{\sqrt{-1}\partial \bar{\partial}}
\newcommand{\ppr}{(\sqrt{-1}/2) \partial \bar{\partial}}
\newcommand{\ppfr}{\frac{\sqrt{-1}}{2} \partial \bar{\partial}}
\newcommand{\im}{\textup{Im}}
\def\lab{\label}
\begin{document}

\title[Form--type Calabi--Yau equations]{Form--type Calabi--Yau equations}

\author{Jixiang Fu}
\address{Institute of Mathematics\\ Fudan University \\ Shanghai
200433, China} \email{majxfu@fudan.edu.cn}
\author{Zhizhang Wang}
\address{Institute of Mathematics\\ Fudan University \\ Shanghai
200433, China} \email{youxiang163wang@163.com}
\author{Damin Wu}
\address{Department of Mathematics \\
         The Ohio State University \\
         1179 University Drive, Newark, OH 43055, U.S.A.}
\email{dwu@math.ohio-state.edu}

\begin{abstract}
Motivated from mathematical aspects of the superstring theory,
we introduce a new equation on a balanced, hermitian manifold,
with zero first Chern class. Solving the equation, one will obtain,
in each Bott--Chern cohomology class, a balanced metric which is hermitian Ricci--flat. T
his can be viewed as a differential form level generalization of the classical Calabi--Yau equation.
We establish the existence and uniqueness of the equation on complex tori, and prove certain uniqueness and
openness on a general K\"ahler manifold.
\end{abstract}
\maketitle

\section{Setting and Equations} \label{se:Setting}
In the superstring theory, the internal space $X^3$ is a complex
three-dimensional manifold with a non-vanishing holomorphic
three-form $\Omega$ \cite{Str} (cf. \cite{BBFTY}). The $N=1$ supersymmetry requires
\cite{Str,LY1}
\begin{equation*}
d(\parallel\Omega\parallel_\omega\omega^2)=0,
\end{equation*}
for some hermitian metric (form) $\omega$. The above equation in
mathematics says that $\omega$ is a conformally balanced metric. (We
recall  that \cite{Mic} a hermitian metric $\omega$ on an
$n$-dimensional complex manifold $X^n$ is called \emph{balanced} if
$\omega$ satisfies that
\[
   d (\omega^{n-1}) = 0 \qquad \textup{on $X^n$.\,)}
\]
Note that \cite{GP, FY} the torus bundles
over $K3$ surfaces and over complex abelian surfaces twisted by two
anti-self dual $(1,1)$-forms admit a non-vanishing holomorphic
three-form $\Omega$ and a natural balanced metric $\omega_0$ such
that
\begin{equation} \label{eq:CY}
   \parallel\Omega\parallel_{\omega_0}=1.
\end{equation}
As important examples in the superstring theory and
non-K\"ahler complex geometry, the complex manifolds
$\#_k(S^3\times S^3)$ for any $k\geq 2$ \cite{Fri2,LT} also admit a
non-vanishing holomorphic three-form \cite{Fri2} and a balanced
metric \cite{FLY}. Moreover, we know that $\#_k(S^3\times S^3)$
satisfies the $\partial\bar\partial$--lemma \cite{Fri2}. A natural
question to ask is, whether $\#_k(S^3 \times S^3)$ admits a balanced
metric $\omega_0$ such that \eqref{eq:CY} holds. Such a metric
$\omega_0$, if exists, will play an important role in the
superstring theory and hermitian geometry.

More generally, let $X^n$ $(n\geq 3)$ be a complex $n$-dimensional
manifold with a non-vanishing holomorphic $n$-form $\Omega$ and with
a balanced metric $\omega_0$.
 We want to
look for a balanced metric $\omega$ such that
\begin{equation} \lab{eq:omgn}
\omega^{n-1}=\omega_0^{n-1}+\frac{\sqrt{-1}}{2}\partial\bar\partial
\varphi,
\end{equation}
for some real $(n-2,n-2)$--form $\varphi$, and such that
\begin{equation} \label{eq:FCY}
   \textup{$\|\Omega\|_{\omega} = $ some positive constant $C_0$}.
\end{equation}
In other words, we would like to find solutions of \eqref{eq:FCY} in the cohomology class $[\omega_0^{n-1}]\in H^{n-1,n-1}_{BC}(X)$. Here $H^{p,q}_{BC}(X)$ stands for the Bott--Chern cohomology:
$$H^{p,q}_{BC}(X)=\frac{(\ker\partial\cap\ker\bar\partial)\cap\Omega^{p,q}(X)}{\textup{im}\ \partial\bar\partial\cap\Omega^{p,q}(X)}.$$
One can certainly normalize the constant $C_0$ in \eqref{eq:FCY} to be 1, as in \eqref{eq:CY}. However, it may be more convenient to set
\begin{equation*} \lab{eq:formCY}
    C_0 = \left(\int_X \omega^n \right)^{-\frac{1}{2}},
\end{equation*}
from the equation point of view.
As in the K\"ahler case, equation \eqref{eq:FCY} is equivalent to the
equation
\begin{equation} \lab{104}
\frac{\det
\omega}{\det\omega_0}=\frac{\parallel\Omega\parallel^2_{\omega_0}}{\parallel\Omega\parallel^2_{\omega}}
 =e^f \frac{\int_X \omega^n}{\int_X \omega_0^n}.
\end{equation}
Here we denote
\[
    e^f = \|\Omega\|^2_{\omega_0} \int_X \omega_0^n,
\]
and denote
\[
  \det\omega=\det (g_{i\bar j}), \qquad \textup{if \quad $\omega=\frac{\sqrt{-1}}{ 2} \sum_{i,j=1}^n g_{i\bar
j}dz_i\wedge d\bar z_j$}.
\]
At the moment, we write
\begin{equation*}
  \begin{split}
        & \omega_0^{n-1}+\frac{\sqrt{-1}}{ 2}\partial\bar\partial \varphi
         =\Bigl(\frac{\sqrt{-1}}{2}\Bigr)^{n-1}(n-1)! \\
        & \qquad \qquad \cdot \sum_{i,j=1}^n (\Psi_{\varphi})_{i\bar j} s(i,j) dz_1\wedge d\bar z_1 \wedge\cdots\wedge
\widehat{dz_i}\wedge\cdots \widehat{d\bar{z}_j}\wedge \cdots\wedge
dz_n\wedge d\bar z_n.
  \end{split}
\end{equation*}
Here the sign function $s(i,j)$ is equal to $1$ if $i \le j$, and is equal to $-1$ if $i > j$.
By \eqref{eq:omgn} and
\begin{equation*}
  \begin{split}
  \omega^{n-1}
  & = \Bigl(\frac{\sqrt{-1}}{2}\Bigr)^{n-1}(n-1)!\\
  & \quad \; \cdot (\det\omega)
    \sum_{i,j=1}^n g^{i\bar j} s(i,j) dz_1\wedge
d\bar z_1 \wedge\cdots\wedge \widehat{dz_i}\wedge\cdots
\widehat{d\bar z_j}\wedge \cdots\wedge dz_n\wedge d\bar z_n,
  \end{split}
\end{equation*}
we have
\begin{equation*}
(\det\omega) g^{i\bar j}=(\Psi_{\varphi})_{i\bar j}, \qquad \textup{for all $1 \le i, j \le n$}.
\end{equation*}
Hence,
\begin{equation*}
\det\omega=\bigl\{\det \big[ (\Psi_{\varphi})_{i\bar{j}}\big] \bigr\}^{\frac 1
{n-1}}=\bigl\{\det
\big[\omega_0^{n-1}+(\sqrt{-1}/2)\partial\bar\partial\varphi\big]\bigr\}^{\frac
1 {n-1}}.
\end{equation*}
Here $\det [\omega_0^{n-1}+(\sqrt{-1}/2)\partial\bar\partial\varphi]$
stands for the determinant of $n\times n$ matrix of its
coefficients.
Thus, equation (\ref{104}) is equivalent to
\begin{equation} \label{eq:ftCY}
    \frac{\det[\omega_0^{n-1}+(\sqrt{-1}/2)\partial\bar\partial\varphi]}{\det\omega_0^{n-1}}=e^{(n-1)f} \left(\frac{\int_X \omega^n}{\int_X \omega_0^n}\right)^{n-1}.
\end{equation}
 We call the above equation 
 the {\sl form-type Calabi--Yau equation}.
 Clearly, by integrating \eqref{104}, we obtain a compatibility condition
 \begin{equation} \label{eq:cpf}
    \int_X e^f \omega_0^n = \int_X \omega_0^n.
 \end{equation}
Let us denote by $\mathcal{P}(\omega_0)$ the set of all smooth real
$(n-2,n-2)$--forms $\psi$ such that
\begin{equation} \label{eq:defPo}
   \omega_0^{n-1}+\frac{\sqrt{-1}}{2}\partial\bar\partial\psi > 0 \qquad \textup{on $X$}.
\end{equation}
The question is therefore reduced to find, for a given $f \in
C^{\infty}(X)$ with \eqref{eq:cpf}, a smooth real $(n-2,n-2)$--form $\varphi \in
\mathcal{P}(\omega_0)$ satisfying \eqref{eq:ftCY}.


Here is the geometric interpretation of our equation.
Let us briefly recall some definitions related to the hermitian
connection. We follow \cite{Kob}. Let $R$ be the curvature of
hermitian connection with respect to metric $\omega$. Then,
\begin{equation*}
R_{i\bar j k\bar l}=-\frac{\partial^2g_{i\bar j}}{\partial
z_k\partial\bar z_l}+\sum_{p,q=1}^n g^{p\bar q}\frac{\partial g_{i\bar
q}}{\partial z_k}\frac{\partial g_{p\bar j}}{\partial\bar z_l}.
\end{equation*}
We set
\begin{equation*}
R_{k\bar l}= \sum_{i,j=1}^n g^{i\bar j}R_{i\bar jk\bar l},
\end{equation*}
and associate with it a real $(1,1)$-form given by
\begin{equation*}
Ric^h=\sqrt{-1}\sum_{k,l=1}^n R_{k\bar l}dz_k\wedge d\bar z_l.
\end{equation*}
We call $Ric^h$ the \emph{Ricci curvature} of hermitian connection.
Clearly,
\begin{equation*}
Ric^h=\sqrt{-1}\bar\partial\partial\log(\det \omega).
\end{equation*}
So $\parallel\Omega\parallel_\omega = C_0$ is equivalent to the Ricci
curvature $Ric^h=0$.


On the other hand, we can also define the Ricci form $Ric^s$ of the spin connection (i.e. Bismut connection)
on a hermitian manifold. The relation between the two Ricci forms is given by \cite{LYZ}
$$Ric^s=Ric^h-dd^\ast\omega.$$
Here $d^\ast$ is the adjoint operator of $d$ with respect to the metric $\omega$. So when $\omega$ is balanced,
$Ric^s=Ric^h$, and hence, $\parallel\Omega\parallel_\omega = C_0$ is also equivalent
to the Ricci curvature of the spin connection is zero.

In particular, if $\omega_0$ is K\"ahler and let $\varphi$ to be
 \begin{equation*} \label{eq:classic}
   \begin{split}
         \mbox{either} \quad & u\sum_{i=0}^{n-2}\binom{n-1}{i} \bigl(\frac{\sqrt{-1}}{2}\partial\bar\partial u\bigr)^{n-i-2}\wedge\omega_0^i, \quad \mbox{or} \\
         &  -\frac{\sqrt{-1}}{2} \p u \wedge \bar{\p} u \wedge \sum_{i=0}^{n-3} \binom{n-1}{i} \bigl(\frac{\sqrt{-1}}{2}\partial\bar\partial u\bigr)^{n-i-3}\wedge\omega_0^i,
     \end{split}
 \end{equation*}
 then \eqref{eq:ftCY} is reduced to
 \begin{equation*}
            \frac{\det(\omega_0+\frac{\sqrt{-1}}{2}\partial\bar\partial u)}{\det{\omega_0}}=e^f.
 \end{equation*}
 This is the classic equation in the Calabi Conjecture on $c_1(X) =0$,  which was
 settled by Yau~\cite{Yau}.


It seems to us that a form-type equation such as \eqref{eq:ftCY} has
not yet been studied. To begin with, we consider the form-type Calabi--Yau equation
on $T^n$, the complex $n$-torus. Let $(z_1,\ldots, z_n)$ be the complex coordinates on $T^n$ induced from $\mathbb{C}^n$. Then, any non-vanishing holomorphic $n$-form $\Omega$ on $T^n$ is equal to
  \[
      d z_1 \wedge \cdots \wedge d z_n
  \]
  up to multiplying a nonzero constant. We fix such an $n$-form $\Omega$.
  By a \emph{constant form} or a \emph{constant metric} on $T^n$ we mean a differential form or a metric on $T^n$ with constant coefficients. Let $\omega_0$ be a balanced metric on $T^n$. As far as the Bott--Chern cohomology class of $\omega_0^{n-1}$ is concerned, we can assume, without loss of generality, that $\omega_0$ is a constant metric on $T^n$. This is due to the fact that any closed differential form on $T^n$ is cohomologous to a constant form, and the $\partial\bar{\partial}$--Lemma. Our result is as follows:
\begin{theo} \label{th:tori}
  Let $\Omega$ be a non-vanishing holomorphic $n$-form on $T^n$ $(n \ge 3)$, and $\omega_0$ is a constant metric on $T^n$ such that $\|\Omega\|_{\omega_0} = 1$.
   We denote by $C_0$ a positive constant.
  \begin{enumerate}
  \item  \label{it:uni} If $C_0 \le 1$, then for any  metric $\omega$ on $T^n$ such that $[\omega^{n-1}]= [\omega_0^{n-1}] \in H^{n-1,n-1}_{BC}(T^n)$ and that $\|\Omega\|_{\omega} = C_0$, we must have $C_0 = 1$ and
  \[
     \omega = \omega_0.
  \]
  \item \label{it:ext} For each $C_0 > 1$, there exists a metric $\omega$ on $T^n$ such that $[\omega^{n-1}] = [\omega_0^{n-1}]$ and that
  \[
     \|\Omega\|_{\omega} = C_0.
  \]
  \end{enumerate}
\end{theo}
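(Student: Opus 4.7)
The plan is to split the theorem into the uniqueness and existence statements and handle them by different techniques.

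For part~(\ref{it:uni}) I will use the pointwise arithmetic--geometric inequality
\[
\omega^{n-1}\wedge\omega_0 \;=\; \tfrac{1}{n}\mathrm{tr}_\omega(\omega_0)\,\omega^n \;\ge\; \bigl(\det g_0/\det g\bigr)^{1/n}\omega^n,
\]
with equality pointwise iff $\omega$ is a positive scalar multiple of $\omega_0$. Since $\|\Omega\|_\omega=C_0$ and $\|\Omega\|_{\omega_0}=1$ give $\omega^n=C_0^{-2}\omega_0^n$ and $\det g_0/\det g=C_0^{2}$, integrating and using the Bott--Chern hypothesis $[\omega^{n-1}]=[\omega_0^{n-1}]$ together with $d\omega_0=0$ (so that $\int\omega^{n-1}\wedge\omega_0 = \int\omega_0^n$) yields $C_0\ge 1$. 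Under $C_0\le 1$ the inequality must saturate pointwise, so $\omega=\lambda\omega_0$ for a positive function $\lambda$; then $d$-closedness of $\omega^{n-1}$ forces $d\lambda^{n-1}\wedge\omega_0^{n-1}=0$, and injectivity of $\omega_0^{n-1}\wedge(\cdot)$ on $1$-forms makes $\lambda$ constant. Nontriviality of the Bott--Chern class $[\omega_0^{n-1}]$ then pins down $\lambda=1$, so $\omega=\omega_0$ and $C_0=1$.

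For part~(\ref{it:ext}) my first observation will be that on $T^n$, averaging over translations and the $\partial\bar\partial$--Lemma identify the Bott--Chern condition $[\Psi]=[\omega_0^{n-1}]$ (for $d$-closed $\Psi$) with the equality of constant averages $\overline{\Psi}=\omega_0^{n-1}$. So it suffices to produce a positive, $d$-closed $(n-1,n-1)$-form $\Psi$ of constant determinant $C_0^{-2(n-1)}$ with $\overline{\Psi}=\omega_0^{n-1}$; Michelsohn's theorem then furnishes a unique positive $(1,1)$-form $\omega$ with $\omega^{n-1}=\Psi$, automatically satisfying $\|\Omega\|_\omega=C_0$.

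To construct such $\Psi$ explicitly, take WLOG $\omega_0=\frac{\sqrt{-1}}{2}\sum_k dz_k\wedge d\bar z_k$ and $\Omega=dz_1\wedge\cdots\wedge dz_n$, choose $s\in(0,1)$ with $s^2=1-C_0^{-2(n-1)}$, set $x_k=\mathrm{Re}\,z_k$, and put
\[
\mu_1=1+s\cos(2\pi x_2),\quad \mu_2=1+s\cos(2\pi x_1),\quad \mu_3=\cdots=\mu_{n-1}=1,\quad \mu_n=\frac{1-s^2}{\mu_1\mu_2}.
\]
Define $\Psi=(\tfrac{\sqrt{-1}}{2})^{n-1}(n-1)!\sum_i\mu_i\,\widehat{dz_i\wedge d\bar z_i}$. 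One verifies: $\Psi>0$, $\det\Psi=\mu_1\mu_2\mu_n=1-s^2$ is constant, $\overline{\Psi}=\omega_0^{n-1}$ (using $\int_0^1 dx/(1+s\cos 2\pi x)=(1-s^2)^{-1/2}$ and the fact that $\mu_1,\mu_2$ depend on disjoint variables to get $\overline{\mu_n}=(1-s^2)\cdot(1-s^2)^{-1}=1$), and finally $d\Psi=0$ because each nonconstant $\mu_i$ is a function only of coordinates whose $dz_k,d\bar z_k$ already sit inside $\widehat{dz_i\wedge d\bar z_i}$, so $d\mu_i\wedge\widehat{dz_i\wedge d\bar z_i}=0$ term by term. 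Michelsohn's inversion then gives the required diagonal $\omega$ with entries $a_i=(1-s^2)^{1/(n-1)}/\mu_i$.

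The main obstacle I expect is finding the above ansatz \emph{ab initio}: the four requirements on $\Psi$ (positivity, $d$-closedness, constant determinant, prescribed average $\omega_0^{n-1}$) are tightly coupled and any naive attempt with constant-coefficient or simple K\"ahler-potential perturbations fails. The precise factorization $\mu_n=(1-s^2)/(\mu_1\mu_2)$ with $\mu_1,\mu_2$ depending on disjoint sets of real variables is exactly what makes $\overline{1/(\mu_1\mu_2)}$ factor through the explicit Poisson-type integral (yielding the correct average), while simultaneously keeping $\mu_n$ independent of $z_n$, which is what forces $d\mu_n\wedge\widehat{dz_n\wedge d\bar z_n}=0$.
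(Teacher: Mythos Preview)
Your argument for part~(\ref{it:uni}) is correct and is essentially the same as what the paper does: both proofs integrate the arithmetic--geometric mean inequality against $\omega_0$ and use that $\int_X\omega_0\wedge\partial\bar\partial\varphi=0$ (the paper phrases this for the torus via a matrix averaging argument, and again in the K\"ahler generality exactly as you do).

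For part~(\ref{it:ext}) your construction is correct but genuinely different from the paper's. The paper takes an ansatz $\varphi$ built from two functions $u,v$ of the single variable $z_1$, so that the determinant equation collapses to the one-dimensional relation $(1+\Delta u)(1+\Delta v)=\delta$ on $T^1$; it then fixes $v=-4k\sin x_1$ and solves a Laplace equation for $u$, with the constant $k$ chosen via the integral identity $\int_0^{2\pi}\frac{dx}{1+k\sin x}=\frac{2\pi}{\sqrt{1-k^2}}$. Your approach instead writes down $\Psi=\omega^{n-1}$ directly as a diagonal form with entries $\mu_1(x_2)$, $\mu_2(x_1)$, $1,\dots,1$, $\mu_n=(1-s^2)/(\mu_1\mu_2)$, and checks positivity, constant determinant, correct average, and $d$-closedness by hand; the same Poisson-type integral appears, now to verify $\overline{\mu_n}=1$. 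What you gain is an entirely closed-form solution with no auxiliary PDE to solve, and a transparent reason for $d\Psi=0$ (each $d\mu_i$ is killed by the factors already present in $\widehat{dz_i\wedge d\bar z_i}$); what the paper's ansatz gains is that it stays literally within the framework $\omega^{n-1}=\omega_0^{n-1}+\frac{\sqrt{-1}}{2}\partial\bar\partial\varphi$ with an explicit $\varphi$, whereas you must appeal to the $\partial\bar\partial$--Lemma on $T^n$ to pass from ``equal averages'' to ``same Bott--Chern class'' and to Michelsohn's bijection to recover $\omega$ from $\Psi$. Both routes are short and valid.
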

One can see from Theorem~\ref{th:tori} that the normalization constant $C_0$ plays a role here. When $C_0 \le 1$, the theorem tells us that the Calabi--Yau metric is the unique canonical balanced metric. It is the second case, $C_0 > 1$, that marks the difference between a form-type equation and a usual function-type equation. In this case, we establish the existence of a desired balanced metric which is not Calabi--Yau. We further generalize the uniqueness part, Theorem~\ref{th:tori} \eqref{it:uni}, to an arbitrary Calabi--Yau manifold:
\begin{theo} \label{th:CYunique}
  Let $X$ be a compact K\"ahler manifold with a non-vanishing holomorphic $n$-form $\Omega$. Let $\omega_{0}$ be a Calabi--Yau metric such that $\|\Omega\|_{\omega_{0}} = 1$. Then, for any balanced metric $\omega$ on $X$ such that $\omega^{n-1}$ represents the Bott--Chern cohomology class of $\omega_0^{n-1}$ and such that $\|\Omega\|_{\omega} = C_0 \le 1$, we have
  \[
     \omega = \omega_0.
  \]
\end{theo}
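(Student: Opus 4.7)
The plan is to combine a pointwise volume identity from the norm hypotheses, an integral identity from the Bott--Chern cohomology hypothesis, and a pointwise AM--GM inequality. First, I translate the norm hypotheses into a pointwise volume identity. By the definition of $\|\Omega\|_\omega$, the quantity $\|\Omega\|_\omega^2\omega^n$ is a fixed constant multiple of $(\sqrt{-1})^{n^2}\Omega\wedge\bar\Omega$ depending only on $n$, hence independent of the hermitian metric; so $\|\Omega\|_{\omega_0}=1$ and $\|\Omega\|_\omega=C_0$ give
$$\omega^n = C_0^{-2}\omega_0^n.$$
Next, the cohomology hypothesis provides a smooth real $(n-2,n-2)$--form $\varphi$ with $\omega^{n-1} = \omega_0^{n-1} + (\sqrt{-1}/2)\partial\bar\partial\varphi$. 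Since $\omega_0$ is K\"ahler, $\partial\omega_0 = \bar\partial\omega_0 = 0$, so $\partial\bar\partial\varphi\wedge\omega_0 = d(\bar\partial\varphi\wedge\omega_0)$ is exact, and Stokes' theorem yields the integral identity
$$\int_X \omega^{n-1}\wedge\omega_0 = \int_X \omega_0^n.$$

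The pointwise core of the argument is the inequality
$$\omega^{n-1}\wedge\omega_0 \,\geq\, (\omega^n)^{(n-1)/n}(\omega_0^n)^{1/n}$$
at every point of $X$, with equality at a point if and only if $\omega = \omega_0$ there. At a point $p$ I diagonalize $\omega$ with respect to $\omega_0$, writing $\omega_0(p) = \sum_i \alpha_i$ and $\omega(p) = \sum_i \lambda_i \alpha_i$ with $\lambda_i > 0$ and $\alpha_i = (\sqrt{-1}/2)\, dz_i \wedge d\bar z_i$ in some unitary coframe. Direct expansion yields
$$\frac{\omega^{n-1}\wedge\omega_0}{\omega_0^n} = \frac{1}{n}\sum_{k=1}^n\prod_{j\neq k}\lambda_j,\qquad \frac{\omega^n}{\omega_0^n} = \prod_{j=1}^n\lambda_j,$$
and AM--GM applied to the $n$ positive numbers $\prod_{j\neq k}\lambda_j$ gives the inequality, with equality precisely when all $\lambda_j$ coincide.

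Substituting $\omega^n = C_0^{-2}\omega_0^n$ into the pointwise inequality and integrating, then invoking the integral identity, produces
$$\int_X \omega_0^n = \int_X \omega^{n-1}\wedge\omega_0 \,\geq\, C_0^{-2(n-1)/n}\int_X \omega_0^n,$$
which forces $C_0\geq 1$. Combined with the standing hypothesis $C_0\leq 1$, this gives $C_0 = 1$ and equality in the pointwise AM--GM estimate almost everywhere, hence everywhere by continuity; all $\lambda_j$ are then equal at each point, and $\prod_j \lambda_j = C_0^{-2} = 1$ forces $\lambda_j \equiv 1$, i.e., $\omega = \omega_0$. The only delicate point is keeping the normalization that makes $\omega^n = C_0^{-2}\omega_0^n$ hold on the nose, with no derivative factors left over; once that is verified, the remainder is a clean Stokes and AM--GM computation, and no regularity issue arises because $\omega$ is smooth by hypothesis.
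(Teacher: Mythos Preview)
Your proof is correct and follows essentially the same approach as the paper's. Both arguments hinge on the Stokes identity $\int_X \omega^{n-1}\wedge\omega_0 = \int_X \omega_0^n$ (using that $\omega_0$ is K\"ahler) together with a pointwise AM--GM inequality, and then analyze the equality case; the paper phrases the AM--GM step in terms of the coefficient matrix of $\omega_0^{n-1}+\tfrac{\sqrt{-1}}{2}\partial\bar\partial\varphi$ relative to that of $\omega_0^{n-1}$, while you phrase it as $\omega^{n-1}\wedge\omega_0 \ge (\omega^n)^{(n-1)/n}(\omega_0^n)^{1/n}$, but these are the same inequality since the eigenvalues of $\omega^{n-1}$ relative to $\omega_0^{n-1}$ are exactly your $\prod_{j\neq k}\lambda_j$.
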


For a general case that $\omega_0$ is non-K\"ahler,
one can use the continuity method to solve \eqref{eq:ftCY}.
As an initial step we consider the openness.
Here we have to assume $X$ to be a K\"ahler manifold, endowed
with a K\"ahler metric $\eta$.
For nonnegative integers $k$ and $m$, and a real number $0<
\alpha < 1$, we denote by $C^{k,\alpha}(\Lambda^{m,m}(X))$ the
H\"older space of real $(m,m)$--forms on $X$, and in particular,
$C^{k,\alpha}(\Lambda^{0,0}(X)) \equiv C^{k,\alpha}(X)$.
Let
\[
   \mathcal{F}^{k,\alpha}(X) = \left\{ g \in C^{k,\alpha}(X); \int_X e^g \, \omega_0^n = \int_X \omega_0^n \right \}.
\]
Then $\mathcal{F}^{k,\alpha}(X)$ is a hypersurface in the Banach space $C^{k,\alpha}(X)$.
Let $\omega_0$ be a Hermitian metric on $X$, and $\mathcal{P}(\omega_0)$ be the set given by \eqref{eq:defPo}.
We define a map $M: \mathcal{P}(\omega_0)\cap C^{k+2,\alpha}(\Lambda^{n-2,n-2}(X)) \to \mathcal{F}^{k,\alpha}(X)$ by
\[
   M(\psi) = \log \left(\frac{\omega_{\psi}^n}{\omega_{0}^n}\right) - \log \left(\frac{\int_X \omega_{\psi}^n}{\int_X \omega_0^n} \right),
\]
where, by abuse of notation, $\mathcal{P}(\omega_0)\cap C^{k+2,\alpha}(\Lambda^{n-2,n-2}(X))$
stands for
\[
   \left\{\psi \in C^{k+2,\alpha}(\Lambda^{n-2,n-2}(X)); \omega_0^{n-1} + \ppr \psi >0\right\},
\]
and for each $\psi \in \mathcal{P}(\omega_0)\cap C^{k+2,\alpha}(\Lambda^{n-2,n-2}(X))$, we denote by $\omega_{\psi}$ the positive $(1,1)$--form on $X$ such that
\[
   \omega_{\psi}^{n-1} = \omega_0^{n-1} + \ppr \psi.
\]
Note that equation~\eqref{eq:ftCY} can be written as
\[
   M(\varphi) = f.
\]
\begin{theo} \label{th:openKa}
Let $X$ be an $n$-dimensional K\"ahler manifold $(n \ge 3)$, $\omega_0$ be a Hermitian metric on $X$, $k \ge n+4$ be an integer, and $0<\alpha<1$ be a real number. Given $f \in \mathcal{F}^{k,\alpha}(X)$, suppose that $\varphi \in
\mathcal{P}(\omega_0)\cap C^{k+2,\alpha}(\Lambda^{n-2,n-2}(X))$
satisfies
\begin{equation*}
  M(\varphi) = f.
\end{equation*}
Then, there is a positive number $\delta$, such that for any $g \in
\mathcal{F}^{k,\alpha}(X)$ with $\|g - f\|_{C^{k,\alpha}(X)}\le \delta$, there
exists a function $\psi \in \mathcal{P}(\omega_0)\cap
C^{k+2,\alpha}(\Lambda^{n-2,n-2}(X))$ such that
\[
    M(\psi) = g.
\]
\end{theo}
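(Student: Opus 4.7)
The plan is to prove the theorem by the inverse function theorem in Banach spaces, after a gauge-fixing that reduces the degenerate equation to an elliptic scalar equation. Because $M(\psi)$ depends on $\psi$ only through $\ppr\psi$, the full differential $DM_\varphi$ vanishes on the infinite-dimensional kernel of $\ppr$ acting on $(n-2,n-2)$-forms, and some finite restriction of the domain is necessary. I will search for the new solution in the one-function family
\[
   \tilde\psi(u) = \varphi + \frac{u}{(n-2)!}\,\eta^{n-2},\qquad u \in C^{k+2,\alpha}(X),
\]
and set $\tilde M(u) := M(\tilde\psi(u))$. For $u$ sufficiently small in $C^{k+2,\alpha}(X)$, one has $\tilde\psi(u) \in \mathcal{P}(\omega_0)$, so $\tilde M$ defines a smooth map from a neighborhood $V \subset C^{k+2,\alpha}(X)$ of $0$ into $\mathcal{F}^{k,\alpha}(X)$ with $\tilde M(0) = f$.

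The central computation is the linearization $T := D\tilde M_{0}$. Since $\eta$ is K\"ahler, $\ppr(u\eta^{n-2}) = \ppr u \wedge \eta^{n-2}$; combining this with the identities $\tfrac{d}{dt}\omega_{\psi_t}^{n} = n\,\omega_{\psi_t}^{n-1}\wedge \dot\omega_{\psi_t}$ and $\omega^{n-1}\wedge \dot\omega = \tfrac{1}{n-1}\,\omega\wedge \tfrac{d}{dt}\omega^{n-1}$ yields
\[
  T(u) = \frac{n}{(n-1)(n-2)!}\left[\frac{\omega_\varphi\wedge\eta^{n-2}\wedge\ppr u}{\omega_\varphi^n} - \frac{\int_X \omega_\varphi\wedge\eta^{n-2}\wedge\ppr u}{\int_X \omega_\varphi^n}\right].
\]
The real $(n-1,n-1)$-form $\omega_\varphi\wedge\eta^{n-2}$ is pointwise strictly positive, so the differential part of $T$ is a second-order scalar linear elliptic operator on $u$ with positive-definite principal symbol, modified only by a rank-one mean-value correction.

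Next I analyze $T$ as an operator $C^{k+2,\alpha}(X) \to C^{k,\alpha}(X)$. If $T(u) = 0$, then $\omega_\varphi \wedge \eta^{n-2} \wedge \ppr u = c\,\omega_\varphi^n$ for some constant $c$; evaluating at points where $u$ attains its maximum and its minimum and using the positivity of $\omega_\varphi\wedge\eta^{n-2}$ together with the sign of $\ppr u$ there forces $c = 0$, after which the strong maximum principle for the resulting linear elliptic equation with no zeroth-order term gives $u \equiv $ const. Hence $\ker T$ is one-dimensional. As a scalar second-order elliptic operator on the compact closed manifold $X$, $T$ is Fredholm of index zero, so its cokernel is also one-dimensional. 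On the other hand, $\mathrm{im}\,T \subset T_f\mathcal{F}^{k,\alpha}(X)$ (because $\tilde M$ takes values in $\mathcal{F}^{k,\alpha}(X)$), and $T_f\mathcal{F}^{k,\alpha}(X)$ has codimension one in $C^{k,\alpha}(X)$, whence $\mathrm{im}\,T = T_f\mathcal{F}^{k,\alpha}(X)$.

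Restricting the domain of $\tilde M$ to the closed complement $\{u \in C^{k+2,\alpha}(X) : \int_X u\,\eta^n = 0\}$ of the constants makes $T$ a bounded linear isomorphism onto $T_f\mathcal{F}^{k,\alpha}(X)$, and the inverse function theorem for smooth maps between Banach spaces (composed with a local chart of $\mathcal{F}^{k,\alpha}(X)$ near $f$) produces $\psi = \tilde\psi(u)$ solving $M(\psi) = g$ for every $g \in \mathcal{F}^{k,\alpha}(X)$ sufficiently close to $f$ in $C^{k,\alpha}(X)$. The principal analytic obstacle is the surjectivity part of the Fredholm analysis of $T$: since $\omega_\varphi$ is only hermitian and in general not closed, $T$ is not self-adjoint and its formal adjoint picks up terms involving $d\omega_\varphi$, so the identification $\mathrm{im}\,T = T_f\mathcal{F}^{k,\alpha}(X)$ must be deduced from the index-zero property of scalar second-order elliptic operators together with the codimension count, rather than from a symmetric Fredholm alternative.
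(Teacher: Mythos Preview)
Your proof is correct and follows the same overall strategy as the paper: both reduce to the scalar ansatz $\psi = \varphi + u\,\eta^{n-2}$, compute the same linearized operator, and show it surjects onto $T_f\mathcal{F}^{k,\alpha}(X)=\{h:\int_X h\,\omega_\varphi^n=0\}$. The difference lies entirely in how surjectivity is established. The paper works in $L^2$: it sets up a bilinear form $A(u,v)$, uses Lax--Milgram to invert $-L+\gamma$, applies the Fredholm alternative for compact operators on $\mathcal{L}$, and proves triviality of the kernel via a weak maximum principle for $W^{1,2}$ solutions (Proposition~\ref{pr:weakmax}); it then bootstraps back to $C^{k+2,\alpha}$ via interior regularity and Sobolev embedding, which is where the hypothesis $k\ge n+4$ enters. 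You instead work directly in H\"older spaces: you invoke the index-zero property of scalar second-order elliptic operators on closed manifolds (stable under the rank-one integral correction), compute $\dim\ker T=1$ by the strong maximum principle for classical solutions, and match the resulting codimension-one image with the codimension-one hyperplane $T_f\mathcal{F}^{k,\alpha}(X)$. Your route is shorter and does not actually need $k\ge n+4$; the paper's route is more self-contained in that it avoids citing the index theorem and handles the non-self-adjointness of $L$ (coming from $d\omega_\varphi\neq 0$) by an explicit weak-solution argument rather than by appeal to homotopy invariance of the index.
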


The rest of the paper is organized as follows: In Section~\ref{se:unique}, we first show Theorem~\ref{th:tori} \eqref{it:uni}.  Next, we prove Theorem~\ref{th:tori} \eqref{it:ext} by explicitly constructing a smooth solution $\varphi\in \mathcal{P}(\omega_0)$ for the form-type equation. These arguments make use of special properties such as the flat structure of $T^n$.
We prove Theorem~\ref{th:CYunique} at the end of Section~\ref{se:unique}. In this respect, we essentially present two proofs for the uniqueness on $T^n$, as they may have interests of their own. In Section~\ref{se:open}, we prove Theorem~\ref{th:openKa} in full details, where one can see the compatibility condition is crucial. Moreover, the approach differs from the standard one in that, the special $(n-2,n-2)$--forms $(u \eta^{n-2})$ are taken, and also in the argument of Proposition~\ref{pr:strong} and Proposition~\ref{pr:weakmax}.

\begin{ack}
The authors would like to thank Professor S.-T.~Yau and also L.-S.
Tseng for helpful discussion.
Part of the work was done while the third named
author was visiting Fudan University, he would like to thank their
warm hospitality. Fu is supported in part by NSFC grants 10771037
and 10831008.
\end{ack}

\section{Uniqueness and Existence} \label{se:unique}
In this section, we adopt the following index convention, unless otherwise indicated. For an $(n-1,n-1)$--form $\Theta$, we denote
\begin{equation*} \label{eq:index}
  \begin{split}
        \Theta & = \Big(\frac{\sqrt{-1}}{2} \Big)^{n-1} (n-1)! \\
            & \quad \cdot \sum_{p,q}s(p,q)\Theta_{p\bar{q}}dz^1\wedge d\bar{z}^1 \cdots
            \wedge\widehat{dz^p} \wedge d \bar{z}^p \wedge\cdots \wedge
            d\bar{z}^q \wedge \widehat{d\bar{z}^q}\wedge\cdots \wedge d z^n \wedge
            d\bar{z}^n,
    \end{split}
\end{equation*}
in which 
\begin{equation} \label{eq:sign}
   s(p,q) =
    \begin{cases}
      - 1,  & \textup{if $p > q$}; \\
      1, & \textup{if $p \le q$}.
    \end{cases}
 \end{equation}
 Here we introduce the sign function $s$ so that, 
 \[
   \begin{split}
    & d z^p \wedge d\bar{z}^q \wedge s(p,q) dz^1\wedge d\bar{z}^1 \cdots
            \wedge\widehat{dz^p} \wedge d \bar{z}^p \wedge\cdots \wedge
            d\bar{z}^q \wedge \widehat{d\bar{z}^q}\wedge\cdots \wedge d z^n \wedge
            d\bar{z}^n \\
        & = dz^1 \wedge d\bar{z}^1 \wedge \cdots \wedge d z^n \wedge d\bar{z}^n, \qquad \textup{for all $1 \le p, q \le n$}.
    \end{split}
 \]
And, if the matrix $(\Theta_{p\bar{q}})$ is invertible, we denote by $(\Theta^{p\bar{q}})$ the transposed inverse of $(\Theta_{p\bar{q}})$, i.e.,
\[
    \sum_l \Theta_{i\bar{l}} \Theta^{j\bar{l}} = \delta_{ij}.
\]
In the following, we may also use the summation convention on repeating indices.
\subsection{Torus case}
Throughout this subsection, we consider $X = T^n$, the complex $n$-torus with $n \ge 3$. We shall prove Theorem~\ref{th:tori}. Note that the first part of Theorem~\ref{th:tori} follows immediately from Lemma~\ref{le:tori} below. We shall prove the second part in Lemma~\ref{le:toriext}.
\begin{lemm} \label{le:tori}
  Let $\omega_0$ be a constant metric on $T^n$. Suppose that there exists an $(n-2,n-2)$--form $\varphi \in \mathcal{P}(\omega_0)$ and a constant $0< C_0 \le 1$ such that
  \begin{equation} \label{eq:unitori}
     C_0 \det \left(\omega_0^{n-1} + \frac{\sqrt{-1}}{2} \partial \bar{\partial}\varphi \right) =  \det \omega_0^{n-1}.
  \end{equation}
  Then, we have $C_0 = 1$ and
  \[
     \pp \varphi = 0.
  \]
\end{lemm}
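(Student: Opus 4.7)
The plan is to convert the hypothesis \eqref{eq:unitori} into a pointwise volume-form identity, combine it with a pointwise AM--GM inequality for mixed wedge products, and then integrate against $\omega_0$---using crucially that $\omega_0$ has constant coefficients---to pinch $C_0$ between two inequalities. Writing $\omega$ for the positive $(1,1)$-form uniquely determined by $\omega^{n-1} = \omega_0^{n-1} + \ppr \varphi$, the hypothesis reads $(\det\omega)^{n-1} = C_0^{-1}(\det\omega_0)^{n-1}$, hence
\[
  \omega^n = C_0^{-1/(n-1)}\,\omega_0^n
\]
pointwise on $T^n$, with constant ratio.

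Next I would establish a pointwise AM--GM bound. At a point, diagonalizing $\omega_0$ and $\omega$ simultaneously, I would use the algebraic identity $n\,\omega_0\wedge\omega^{n-1} = (\mathrm{tr}_\omega\omega_0)\,\omega^n$ and apply AM--GM to the positive eigenvalues of $\omega_0$ relative to $\omega$:
\[
  \omega_0\wedge\omega^{n-1}\ge\left(\frac{\det\omega_0}{\det\omega}\right)^{1/n}\omega^n = C_0^{-1/n}\,\omega_0^n,
\]
with equality if and only if $\omega_0$ is, at that point, a positive scalar multiple of $\omega$.

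The torus-specific step enters at integration: since $\omega_0$ has constant coefficients, $\partial\omega_0 = \bar\partial\omega_0 = 0$, so $\omega_0\wedge\ppr\varphi$ is $d$-exact, and therefore
\[
  \int_{T^n}\omega_0\wedge\omega^{n-1} = \int_{T^n}\omega_0\wedge\omega_0^{n-1} = \int_{T^n}\omega_0^n.
\]
Integrating the AM--GM inequality and canceling $\int_{T^n}\omega_0^n$ yields $1 \ge C_0^{-1/n}$, i.e.\ $C_0 \ge 1$; combined with the hypothesis $C_0 \le 1$, this forces $C_0 = 1$.

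With $C_0 = 1$, the integrated bound is saturated, so the pointwise AM--GM inequality is an equality everywhere on $T^n$; hence $\omega_0 = \mu(x)\omega$ for some positive scalar function $\mu$. The pointwise identity $\det\omega = \det\omega_0$ then forces $\mu \equiv 1$, so $\omega \equiv \omega_0$ and consequently $\pp\varphi = 0$. The only real obstacle I anticipate is bookkeeping: verifying $n\,\omega_0\wedge\omega^{n-1} = (\mathrm{tr}_\omega\omega_0)\,\omega^n$ under the paper's sign conventions, and being explicit that the $d$-exactness of $\omega_0\wedge\ppr\varphi$ genuinely needs $\omega_0$ to be closed---this is precisely the constant-coefficient ingredient that pins the argument to the torus and (modulo the Calabi--Yau condition for $\det\omega_0$) is what will have to be replaced by the K\"ahler hypothesis in the proof of Theorem~\ref{th:CYunique}.
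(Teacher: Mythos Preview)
Your proof is correct and follows the same AM--GM-plus-integration strategy as the paper. The only difference is presentational: the paper carries out the torus argument at the level of the coefficient matrices of the $(n-1,n-1)$--forms (normalizing the coefficient matrix of $\omega_0^{n-1}$ to the identity by a constant linear change, observing that each entry of the perturbation $F_\varphi$ has zero integral over $T^n$, and then applying $\mathrm{tr}(I+B)/n\ge\det(I+B)^{1/n}$), whereas you phrase the identical computation via the wedge product $\omega_0\wedge\omega^{n-1}$ and the $d$-exactness of $\omega_0\wedge\ppr\varphi$. Your formulation is in fact precisely the one the paper adopts a page later to prove Theorem~\ref{th:CYunique} in the K\"ahler case, so your closing remark about what the constant-coefficient hypothesis will be replaced by is exactly right.
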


We need two propositions to derive Lemma~\ref{le:tori}. Let $(z_1,\ldots,z_n)$ be the complex coordinates on $T^n$ induced from $\mathbb{C}^n$.
The corresponding real coordinates are $(x_1,\ldots,x_{2n})$. Here we denote
\begin{equation} \label{eq:zx}
   z_i=x_{2i-1}+\sqrt{-1}x_{2i}, \quad \textup{and hence, $\frac{\partial}{\partial z_i} = \frac{1}{2}\left(\frac{\partial}{\partial x_{2i-1}} - \sqrt{-1}\frac{\partial}{\partial x_{2i}}\right)$},
\end{equation}
for all $1 \le i \le n$. We choose the following volume form on $T^n$:
\begin{eqnarray}
    dV=(\sqrt{-1}/2)^n dz_1 \wedge d\bar{z}_1\wedge \cdots \wedge dz_n\wedge
d\bar{z}_n. \nonumber
\end{eqnarray}

Here are two elementary facts:
\begin{prop} \label{pr:tori1}
For any smooth complex function $f$ defined on $T^n$, we have
\begin{eqnarray}
    \int_{T^n}\frac{\partial^2f}{\partial z_i\partial z_j}dV=0\nonumber, \qquad \textup{for all $i,j=1\cdots,n$.}
\end{eqnarray}
\end{prop}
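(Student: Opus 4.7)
The plan is to express the integrand $\frac{\partial^2 f}{\partial z_i \partial z_j}\,dV$ as an exact form on $T^n$, so that Stokes' theorem applied on the closed manifold $T^n$ gives zero. I would first reduce to a first--derivative analogue: if $g$ is any smooth complex function on $T^n$, then $\int_{T^n} (\partial g/\partial z_i)\,dV = 0$; the proposition then follows by taking $g = \partial f/\partial z_j$, which is still smooth, and allowing $i=j$ without any modification of the argument.

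To realize $(\partial g/\partial z_i)\,dV$ as an exterior derivative, I would introduce the auxiliary $(n-1,n)$--form
\[
   \Omega_i = \Bigl(\frac{\sqrt{-1}}{2}\Bigr)^n dz_1 \wedge d\bar z_1 \wedge \cdots \wedge \widehat{dz_i} \wedge d\bar z_i \wedge \cdots \wedge dz_n \wedge d\bar z_n,
\]
obtained from $dV$ by deleting the factor $dz_i$. Since $\Omega_i$ has constant coefficients, $d\Omega_i = 0$ and hence $d(g\Omega_i) = dg \wedge \Omega_i$. In the expansion $dg = \sum_k (\partial g/\partial z_k)\,dz_k + \sum_k (\partial g/\partial \bar z_k)\,d\bar z_k$, every $d\bar z_k$ already appears in $\Omega_i$, as does every $dz_k$ with $k\neq i$, so those terms are annihilated by the wedge. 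Only the $dz_i$--term survives, and a short count of $2(i-1)$ transpositions of 1--forms shows $dz_i \wedge \Omega_i = dV$. Therefore
\[
   d(g\Omega_i) = \frac{\partial g}{\partial z_i}\,dV.
\]

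Integrating over the closed manifold $T^n$ and invoking Stokes' theorem yields $\int_{T^n} (\partial g/\partial z_i)\,dV = 0$ for any smooth $g$; specializing to $g = \partial f/\partial z_j$ gives the proposition, valid for all $1\le i,j \le n$. No real obstacle arises here: the only small bookkeeping step is verifying that the permutation carrying $dz_i \wedge \Omega_i$ into the standard ordering of $dV$ involves an even number of 1--form transpositions and thus contributes the sign $+1$.
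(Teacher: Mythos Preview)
Your argument is correct. The paper takes a slightly different route: it splits $f$ into real and imaginary parts, passes to real coordinates via $z_i = x_{2i-1} + \sqrt{-1}\,x_{2i}$, expands $\partial^2/\partial z_i\partial z_j$ as a linear combination of the real second partials $\partial^2/\partial x_a\partial x_b$, and then observes that each such term integrates to zero against $dV = dx_1\wedge\cdots\wedge dx_{2n}$ by the fundamental theorem of calculus (periodicity). Your approach stays in complex coordinates and packages the same vanishing as a single application of Stokes' theorem to the $(2n-1)$--form $g\,\Omega_i$; this is a bit more intrinsic and avoids the real/imaginary decomposition and the four-term expansion of the complex derivative. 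The paper's method, on the other hand, makes the periodicity mechanism completely explicit and requires no sign bookkeeping for wedge products. Both are entirely elementary and of comparable length.
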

\begin{proof}
We write
\begin{eqnarray}
    f=f_1+\sqrt{-1}f_2\nonumber,
\end{eqnarray}
where $f_1,f_2$ are real functions on $T^n$. Then,
\begin{eqnarray}
4 \frac{\partial^2f_1}{\partial z_i\partial
z_j}=\frac{\partial^2f_1}{\partial x_{2i-1}\partial
x_{2j-1}}-\frac{\partial^2f_1}{\partial x_{2i}\partial
x_{2j}}-\sqrt{-1}\left(\frac{\partial^2f_1}{\partial x_{2i}\partial
x_{2j-1}}+\frac{\partial^2f_1}{\partial x_{2i-1}\partial
x_{2j}}\right).\nonumber
\end{eqnarray}
We have a similar equation for $f_2$. And note that
\begin{eqnarray}
dV = dx_1\wedge\cdots\wedge dx_{2n}.\nonumber
\end{eqnarray}
The result then obviously follows from the fundamental theorem of calculus.
\end{proof}

\begin{prop} \label{pr:tori2}
Let $B=(b_{i\bar{j}})$ be a hermitian matrix on $T^n$, in which each entry
$b_{i\bar{j}}$ is a complex smooth  function defined on $T^n$ such that
\begin{eqnarray}
\int_{T^n} b_{i\bar{j}} \, dV=0\nonumber.
\end{eqnarray}
Assume that $I+B$ is everywhere positive definite, and there is a constant $c \ge 1$ such that
\begin{eqnarray}
\det(I+B)= c \nonumber \quad \textup{on $T^n$, \; where $I \equiv (\delta_{i\bar{j}})$}.
\end{eqnarray}
Then, $c = 1$ and $B=0$.
\end{prop}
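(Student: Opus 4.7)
The plan is to apply the arithmetic--geometric mean inequality to the eigenvalues of $I+B$ pointwise on $T^n$, and then to integrate to exploit the vanishing-average hypothesis $\int_{T^n}b_{i\bar{j}}\,dV=0$.

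First I would observe that since $I+B$ is everywhere positive definite Hermitian, its eigenvalues $\lambda_1(x),\ldots,\lambda_n(x)$ are real and positive, so AM--GM gives
\[
   c^{1/n} \;=\; \det(I+B)^{1/n} \;\le\; \frac{1}{n}\,\textup{tr}(I+B) \;=\; 1 + \frac{1}{n}\sum_{i=1}^n b_{i\bar{i}}
\]
at every point of $T^n$. Integrating over $T^n$ and using $\int_{T^n} b_{i\bar{i}}\,dV=0$ then yields $c^{1/n}\,\textup{Vol}(T^n)\le \textup{Vol}(T^n)$, so $c\le 1$. Combined with the standing assumption $c\ge 1$, this already pins down $c=1$.

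With $c=1$ in hand, the pointwise AM--GM inequality becomes $\sum_i b_{i\bar{i}}\ge 0$ on $T^n$, and the vanishing of its integral forces $\textup{tr}\,B\equiv 0$. Substituting back, the two identities $\det(I+B)\equiv 1$ and $\textup{tr}(I+B)\equiv n$ realize the equality case of AM--GM, which compels $\lambda_1=\cdots=\lambda_n$ at every point. The determinant constraint together with positivity then pins every eigenvalue to $1$, i.e.\ $I+B=I$, so $B=0$.

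The argument is entirely algebraic, and I do not anticipate any real obstacle; the only global input is the constraint $\int_{T^n}b_{i\bar{i}}\,dV=0$, which is exactly what is needed to convert the pointwise AM--GM estimate into a global one. The hypothesis $c\ge 1$ is what makes the inequality sharp in the correct direction, and part \eqref{it:ext} of Theorem~\ref{th:tori} indicates that an analogous uniqueness statement will fail when this hypothesis is dropped, which is consistent with the expectation that the present approach cannot extend to $c<1$.
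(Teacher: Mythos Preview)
Your proof is correct and is essentially identical to the paper's own argument: both apply the AM--GM inequality to the eigenvalues of $I+B$, integrate using $\int_{T^n} b_{i\bar i}\,dV=0$ to force $c=1$, and then invoke the equality case of AM--GM pointwise to conclude $B=0$. The only cosmetic difference is that the paper explicitly diagonalizes $B$ by a unitary matrix at each point to identify the equality case, whereas you cite the equality characterization directly.
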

\begin{proof}
Since $I+B$ is  positive definite, we have
\begin{eqnarray} \label{eq:agmtori}
\frac{\text{tr}(I+B)}{n}\geqq \sqrt[n]{\det(I+B)}= \sqrt[n]{c} \qquad \textup{on $T^n$}.
\end{eqnarray}
Integrating \eqref{eq:agmtori} over $T^n$, we obtain
\begin{eqnarray}
\int_{T^n}dV=\int_{T^n}\frac{\text{tr}(I+B)}{n}dV \ge \sqrt[n]{c} \int_{T^n}dV.\nonumber
\end{eqnarray}
Thus, $c=1$, and the inequality of \eqref{eq:agmtori} is in fact an equality. That is,
\begin{eqnarray} \label{eq:eqagmtori}
\frac{\text{tr}(I+B)}{n}= \sqrt[n]{\det(I+B)}=1, \qquad \textup{on $T^n$}.
\end{eqnarray}
Now at an arbitrary point $x$ in $T^n$, we choose a unitary matrix $U$ such that
\begin{eqnarray}
UB\bar{U}^T=\text{dial}\{\lambda_1,\cdots,\lambda_n\}.\nonumber
\end{eqnarray}
Then \eqref{eq:eqagmtori} is equivalent to that
\begin{eqnarray}
1+\lambda_1=1+\lambda_2 = \cdots=1+\lambda_n=1.\nonumber
\end{eqnarray}
This implies that
\begin{eqnarray}
\lambda_i=0, \qquad \textup{for all $i = 1,\ldots,n$}.\nonumber
\end{eqnarray}
Therefore, $B=0$ at $x$. Since $x$ is arbitrary, this finishes the proof.
\end{proof}
Let us now proceed to prove Lemma~\ref{le:tori}:
\begin{pfletori}
Let
\begin{equation*}
  \begin{split}
        \omega_0^{n-1} & = \Big(\frac{\sqrt{-1}}{2} \Big)^{n-1} (n-1)! \\
            & \quad \cdot \sum_{p,q}\Psi_{p\bar{q}} s(p,q) dz^1\wedge d\bar{z}^1 \wedge \cdots
            \wedge\widehat{dz^p}\wedge\cdots \wedge
            \cdots \wedge\widehat{d\bar{z}^q}\wedge\cdots \wedge dz^n \wedge
            d\bar{z}^n.
    \end{split}
\end{equation*}
Here $(\Psi_{i\bar{j}})$ is a constant, positive definite, hermitian matrix, and $s(p,q)$ is given by \eqref{eq:sign}. We can then take a non-degenerate constant matrix $A$ such that
\begin{eqnarray} \label{eq:AOA}
A(\Psi_{i\bar{j}})\bar{A}^T=I.
\end{eqnarray}
We define a hermitian matrix $F_{\varphi} = ((F_{\varphi})_{i\bar{j}})$ on $T^n$ by
\begin{equation*}
  \begin{split}
        \frac{\sqrt{-1}}{2}\partial \bar{\partial}\varphi & = \Big(\frac{\sqrt{-1}}{2} \Big)^{n-1} (n-1)! \\
            & \cdot \sum_{p,q}(F_{\varphi})_{p\bar{q}} s(p,q) dz^1\wedge d\bar{z}^1 \wedge \cdots
            \wedge\widehat{dz^p}\wedge\cdots \wedge
            \cdots \wedge\widehat{d\bar{z}^q}\wedge\cdots \wedge dz^n \wedge
            d\bar{z}^n.
    \end{split}
\end{equation*}
It follows from Proposition~\ref{pr:tori1} that
\begin{eqnarray}
\int_{T^n}(F_{\varphi})_{i\bar{j}}dV=0.\nonumber
\end{eqnarray}
Then, by \eqref{eq:unitori} and \eqref{eq:AOA},
\begin{eqnarray}
\det(I+AF_{\varphi}\bar{A}^T)= C^{-1}_0.\nonumber
\end{eqnarray}
Since $\varphi \in \mathcal{P}(\omega_0)$, we obtain
\begin{eqnarray}
I + AF_{\varphi}\bar{A}^T > 0 \qquad \textup{on $T^n$}.\nonumber
\end{eqnarray}
Applying Proposition~\ref{pr:tori2} yields that $C_0 =1$, and
\begin{eqnarray}
A F_{\varphi} \bar{A}^T=0,\nonumber
\end{eqnarray}
and therefore,
\[
    F_{\varphi} = 0.
\]
\qed
\end{pfletori}

The following lemma establishes the second part of Theorem~\ref{th:tori}. By a linear transformation, if necessary, we can assume the constant metric $\omega_0$ on $T^n$ to be the standard metric:
\begin{eqnarray}
    \omega_0 =\frac{\sqrt{-1}}{2} \big( dz_1 \wedge d\bar{z}_1 +\cdots +dz_n\wedge d\bar{z}_n \big). \nonumber
\end{eqnarray}
\begin{lemm} \label{le:toriext}
For any $0 < \delta < 1$, there exists a smooth $(n-2,n-2)$--form $\varphi \in \mathcal{P}(\omega_0)$ such that
\begin{eqnarray} \label{eq:deltacy}
    \det \left(\omega_0^{n-1}+\frac{\sqrt{-1}}{2}\partial\bar{\partial}\varphi\right)=\delta\det
\omega_0^{n-1}.
\end{eqnarray}
\end{lemm}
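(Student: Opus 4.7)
The plan is to construct $\varphi$ directly, by prescribing the Hermitian coefficient matrix of $\omega_0^{n-1}+(\sqrt{-1}/2)\partial\bar\partial\varphi$ and then realizing the prescription via the $\partial\bar\partial$-lemma on the compact K\"ahler manifold $T^n$. Since $\omega_0$ is the standard metric, its coefficient matrix in the sense of Section~\ref{se:unique} is the identity, and \eqref{eq:deltacy} reduces to $\det(I+F_\varphi)=\delta$, where $F_\varphi$ denotes the Hermitian matrix attached to $(\sqrt{-1}/2)\partial\bar\partial\varphi$. It therefore suffices to produce a smooth, pointwise positive-definite Hermitian matrix function $M$ on $T^n$ with $\det M=\delta$, and to show that $M-I$ arises as $F_\varphi$ for some real $(n-2,n-2)$-form $\varphi$.

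I will try the diagonal ansatz $M=\mathrm{diag}(\lambda_1,\ldots,\lambda_n)$ with $\lambda_k>0$ and $\prod_k\lambda_k=\delta$; the corresponding target $(n-1,n-1)$-form is
\[
   \Theta=\Bigl(\frac{\sqrt{-1}}{2}\Bigr)^{n-1}(n-1)!\sum_{k=1}^n(\lambda_k-1)\,\hat{e}_k,
   \qquad \hat{e}_k:=\bigwedge_{i\neq k}dz_i\wedge d\bar{z}_i.
\]
Since each $\hat{e}_k$ has constant coefficients, $\partial\Theta$ is proportional to $\sum_k(\partial\lambda_k/\partial z_k)\,dz_k\wedge\hat{e}_k$ (all mixed terms $dz_p\wedge\hat{e}_k$ for $p\neq k$ vanish), and similarly for $\bar\partial\Theta$; moreover the $(n,n-1)$-forms $dz_k\wedge\hat{e}_k$ are linearly independent because each one is missing a different $d\bar{z}_k$. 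Since harmonic forms on the flat torus $T^n$ are exactly the constant forms, the $\partial\bar\partial$-lemma shows that $\Theta=(\sqrt{-1}/2)\partial\bar\partial\varphi$ for some smooth real $(n-2,n-2)$-form $\varphi$ if and only if (i) each $\lambda_k$ is independent of $z_k$, and (ii) the average of $\lambda_k$ over $T^n$ equals $1$ for each $k$.

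To meet these conditions I will take $\lambda_k=1$ for $k\notin\{2,3\}$ and set $\lambda_2=B(z_1)$ and $\lambda_3=\delta/B(z_1)$, where $B>0$ is a smooth function on $T^1$ with $\langle B\rangle=1$ and $\langle 1/B\rangle=1/\delta$ (here $\langle\,\cdot\,\rangle$ denotes average). Such $B$ exists: for any non-constant smooth $\phi$ on $T^1$ with $\langle\phi\rangle=0$, the family $B_t:=e^{t\phi}/\langle e^{t\phi}\rangle$ satisfies $\langle B_t\rangle=1$ and $\langle 1/B_t\rangle=\langle e^{t\phi}\rangle\langle e^{-t\phi}\rangle$, which depends continuously on $t$, equals $1$ at $t=0$, and tends to $+\infty$ as $t\to\infty$; since $1/\delta\in(1,\infty)$, the intermediate value theorem yields the required $t$.

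With this choice $M$ is pointwise positive, $\det M=\delta$ pointwise, and $\Theta$ is $\partial\bar\partial$-exact. The $\partial\bar\partial$-lemma then produces the required smooth real $(n-2,n-2)$-form $\varphi$, which lies in $\mathcal{P}(\omega_0)$ because $I+F_\varphi=M>0$, and satisfies \eqref{eq:deltacy} because $\det(I+F_\varphi)=\det M=\delta=\delta\det\omega_0^{n-1}$. I expect the main subtle point to be recognizing how the closedness $d\Theta=0$ forces each $\lambda_k$ to be independent of $z_k$; once that is identified, a single non-trivial function $B$ on $T^1$ suffices, and the hypothesis $\delta<1$ enters naturally through the Jensen bound $\langle 1/B\rangle\geq 1/\langle B\rangle=1$.
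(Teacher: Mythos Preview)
Your proof is correct and follows essentially the same strategy as the paper's: both reduce to producing two positive smooth functions of $z_1$ alone with pointwise product $\delta$ and each of average $1$ on $T^1$, then invoke the intermediate value theorem on a one-parameter family. The only differences are cosmetic---the paper writes $\varphi$ down explicitly (so that $1+\Delta u$ and $1+\Delta v$ play the role of your $\lambda_2,\lambda_3$) and uses the sinusoidal family $1+k\sin x_1$, whereas you prescribe the target $(n-1,n-1)$-form and appeal to the $\partial\bar\partial$-lemma on $T^n$, using the exponential family $e^{t\phi}/\langle e^{t\phi}\rangle$.
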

\begin{pfletoriext}
We set
\begin{equation} \label{eq:defvarphi}
  \begin{split}
    \varphi
    & =  (n-1)! \left(\frac{\sqrt{-1}}{2}\right)^{n-2} \Big[u(z_1,\bar{z}_1)dz_3\wedge d\bar{z}_3 \wedge \cdots \wedge dz_n \wedge d\bar{z}_n \\
    & \quad  + \, v(z_1,\bar{z}_1)dz_2 \wedge d \bar{z}_2 \wedge \widehat{dz_3} \wedge \widehat{d\bar{z}_3} \wedge d z_4 \wedge d\bar{z}_4 \wedge \cdots \wedge dz_n \wedge d\bar{z}_n \Big].
    \end{split}
\end{equation}
Here $u,v$ are two real, smooth, periodic functions to be determined, with $1 + \Delta u >0$ and $1 + \Delta v > 0$. Since $u$ and $v$ depend only on the first variable, the equation \eqref{eq:deltacy} becomes that
\begin{equation} \label{eq:delta1}
   \Big(1 + \frac{\p^2 u}{\p z_1 \p \bar{z}_1}\Big)\Big(1 + \frac{\p^2 v}{\p z_1 \p \bar{z}_1}\Big) = \delta.
\end{equation}
This reduces to an equation on $T^1$. Note that
\[
    \frac{\p^2 u}{\p z_1 \p \bar{z}_1} = \Delta u, \quad \frac{\p^2 v}{\p z_1 \p \bar{z}_1} = \Delta v,
\]
where $\Delta$ is the standard Laplacian on $T^1$, i.e., the Laplacian associated with $\omega_0|_{T^1}$. We can rewrite \eqref{eq:delta1} as
\begin{equation} \label{eq:delta3}
   1 + \Delta u = \frac{\delta}{1 + \Delta v}.
\end{equation}
Our strategy is to fix a function $v$ and then solve \eqref{eq:delta3} for a function $u$. Note that for a fixed $v$, the necessary and sufficient condition to solve \eqref{eq:delta3} is that
\begin{equation} \label{eq:nesu}
   \int_{T^1} \omega_0|_{T^1} = \delta \int_{T^1} \frac{\omega_0|_{T^1}}{1 + \Delta v}.
\end{equation}

Now let
\begin{equation} \label{eq:defvtori}
    v =  - 4 k \sin\Big(\frac{z_1 + \bar{z}_1}{2} \Big) = - 4k \sin x_1,
\end{equation}
where $0 < k < 1$ is a constant to be determined, and the change of coordinates is given by \eqref{eq:zx}. Then, \eqref{eq:nesu}
becomes that
\[
    \int_{T^1} dx_1 \wedge dx_2 = \int_{T^1} \frac{\delta}{1 + k \sin x_1} dx_1 \wedge dx_2,
\]
that is,
\begin{eqnarray} \label{eq:deltak}
\int^{2\pi}_0\frac{\delta}{1+k\sin x_1}dx_1=2\pi.
\end{eqnarray}
It follows from the proposition below that, for each $0 < \delta < 1$, there exists a real number $0 < k < 1$, depending only on $\delta$, such that \eqref{eq:deltak} holds. Therefore, for $v$ given by \eqref{eq:defvtori}, there is a smooth function $u$, unique up to a constant, satisfies \eqref{eq:delta3}.
Also, by the construction,
\[
   1 + \Delta v > 0, \qquad 1 + \Delta u > 0.
\]
Thus, by \eqref{eq:defvarphi} we obtain an $(n-2,n-2)$--form $\varphi \in \mathcal{P}(\omega_0)$ which solves \eqref{eq:deltacy}.
\qed
\end{pfletoriext}
\begin{prop}
    Let \begin{eqnarray}
Z(k)= \frac{1}{2\pi}\int^{2\pi}_0\frac{1}{1+k\sin x}dx, \qquad \textup{for all $0 \le k < 1$}.
\end{eqnarray}
Then, for any $0 < \delta < 1$, there exists a unique number $0< k_{\delta} < 1$ such that
\[
   Z(k_{\delta}) = \delta^{-1}.
\]
\end{prop}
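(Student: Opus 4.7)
The plan is to evaluate $Z(k)$ in closed form and then invert. Writing $z = e^{\sqrt{-1} x}$ so that $\sin x = (z - z^{-1})/(2\sqrt{-1})$ and $dx = dz/(\sqrt{-1} z)$, the integral becomes a contour integral over the unit circle:
\[
   Z(k) = \frac{1}{2\pi} \oint_{|z|=1} \frac{2\,dz}{k z^2 + 2\sqrt{-1} z - k}.
\]
The roots of $k z^2 + 2\sqrt{-1}z - k$ are $z_\pm = \sqrt{-1}(-1 \pm \sqrt{1-k^2})/k$, and for $0 < k < 1$ only $z_+$ lies inside the unit disc. A residue computation then yields
\[
   Z(k) = \frac{1}{\sqrt{1-k^2}}, \qquad 0 \le k < 1.
\]
(Equivalently, the Weierstrass substitution $t = \tan(x/2)$ reduces the integral to $\int_{-\infty}^{\infty} 2\,dt/(t^2 + 2kt + 1)$, giving the same answer.)

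With this explicit formula, the rest is immediate. The function $k \mapsto 1/\sqrt{1-k^2}$ is continuous and strictly increasing on $[0,1)$, with $Z(0) = 1$ and $\lim_{k \to 1^-} Z(k) = +\infty$. For any $0 < \delta < 1$ we have $\delta^{-1} \in (1, +\infty)$, so by the intermediate value theorem combined with strict monotonicity there exists a unique $k_\delta \in (0,1)$ with $Z(k_\delta) = \delta^{-1}$; in fact $k_\delta = \sqrt{1 - \delta^2}$.

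The only step requiring any care is the evaluation of the integral, and even that is a standard computation; there is no serious obstacle.
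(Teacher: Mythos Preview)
Your argument is correct. The contour-integral evaluation $Z(k)=1/\sqrt{1-k^2}$ is the standard one and is carried out accurately; strict monotonicity, the endpoint values, and the explicit inverse $k_\delta=\sqrt{1-\delta^2}$ then follow at once.

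The paper takes a more qualitative route: it does not evaluate $Z$ in closed form. Instead it observes $Z(0)=1$, bounds $Z(k)$ from below by the partial integral over $[3\pi/2,2\pi]$ (which is computed to be $\tfrac{1}{\pi\sqrt{1-k^2}}\arctan\sqrt{(1+k)/(1-k)}$ and blows up as $k\to 1^-$), and then invokes the intermediate value theorem for existence; uniqueness is obtained by checking $Z'(0)=0$ and $Z''(k)>0$ via differentiation under the integral sign, which gives strict monotonicity on $(0,1)$. Your closed-form approach is shorter and yields the explicit solution $k_\delta$ for free; the paper's approach, by contrast, would still go through if the integrand were replaced by one without an elementary antiderivative.
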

\begin{proof}
  Clearly, the function $Z$ is smooth on $0 \le k < 1$. Note that $Z(0) = 1$, and that
  \[
    \begin{split}
     Z(k)
     & \ge \frac{1}{2\pi}\int_{3\pi/2}^{2\pi} \frac{dx}{1 + k \sin x} \\
     & = \frac{1}{\pi \sqrt{1-k^2}} \arctan \sqrt{\frac{1+k}{1-k}}
     \to +\infty, \quad \textup{as $k \to 1^-$}.
     \end{split}
  \]
  The existence then follows from the intermediate value theorem in calculus. The uniqueness is due to the monotonicity of $Z$ on $[0,1)$, which is readily seen by verifying $Z'(0) = 0$ and $Z''(k)> 0$ on $[0,1)$.
\end{proof}
  Lemma~\ref{le:toriext} can be easily generalized to the case of the product of a compact hermitian manifold with $T^k$, $k\ge 3$. See the corollary below:
\begin{coro}
  Let $M^n = N^{n-k} \times T^k$, $k \ge 3$, where $(N^{n-k},\omega_N)$ is an $(n-k)$-dimensional compact hermitian manifold.
    We denote $\omega = \omega_N + \omega_0$, where $\omega_0$ is a constant metric on $T^k$. Then, for any $1 > \delta > 0$, there exists a smooth $(n-2,n-2)$--form $\psi \in \mathcal{P}(\omega)$ on $M$ such that
      \[
         \det \left( \omega^{n-1} + \ppfr \psi \right) = \delta \det (\omega^{n-1}).
      \]
\end{coro}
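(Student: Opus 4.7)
The plan is to reduce the equation on $M$ to the determinant equation on $T^k$ already solved in Lemma~\ref{le:toriext}, by taking $\psi$ to be a product of the $T^k$--solution with a top power of $\omega_N$. Because $\omega = \omega_N + \omega_0$ has an orthogonal block structure, the coefficient matrix associated to $\omega^{n-1} + \ppr \psi$ should decouple into an untouched $N$--block and a $T^k$--block carrying the entire form--type equation.

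After a linear change of variables on $T^k$, I may assume $\omega_0$ is the standard constant metric. Let $\varphi_0 \in \mathcal{P}(\omega_0)$ be the $(k-2,k-2)$--form on $T^k$ produced by Lemma~\ref{le:toriext}, so that $\det(\omega_0^{k-1} + \ppr \varphi_0) = \delta \det \omega_0^{k-1}$. Writing pullbacks under the projections $M \to N$ and $M \to T^k$ by the same symbols, I would set
\[
   \psi = \binom{n-1}{n-k}\, \varphi_0 \wedge \omega_N^{n-k}.
\]
The key observation is that $\omega_N^{n-k}$ is a top--degree form on the $(n-k)$--dimensional complex manifold $N$, so $\partial \omega_N^{n-k}$ and $\bar\partial \omega_N^{n-k}$ vanish for purely dimensional reasons; no closedness hypothesis on $\omega_N$ is needed. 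Hence $\ppr \psi = \binom{n-1}{n-k}\, (\ppr \varphi_0) \wedge \omega_N^{n-k}$, and in the binomial expansion of $\omega^{n-1}$ only the two terms with $j = n-k-1$ and $j = n-k$ survive. Reading off the coefficient matrix $\Theta$ of $\omega^{n-1} + \ppr \psi$ in the index convention of this section, the fact that $\omega_N$, $\omega_0$, and $\varphi_0$ involve disjoint sets of differentials $\{dw_\alpha\}$ and $\{dz_i\}$ makes $\Theta$ block diagonal, with the $(n-k) \times (n-k)$ $N$--block equal to $(\det g_N)\, g_N^{-1}$ and the $k \times k$ $T^k$--block equal to $(\det g_N)(I_k + F_{\varphi_0})$, where $F_{\varphi_0}$ is the matrix of $\ppr \varphi_0$ from the proof of Lemma~\ref{le:toriext}.

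Taking determinants, the identity $\det \Theta = \delta \det \omega^{n-1}$ reduces to $\det(I_k + F_{\varphi_0}) = \delta$, which holds by construction of $\varphi_0$; and the positivity $\omega^{n-1} + \ppr \psi > 0$ follows from positivity of each diagonal block (the $N$--block because $g_N^{-1} > 0$, the $T^k$--block because $\varphi_0 \in \mathcal{P}(\omega_0)$). The only step requiring genuine care is the bookkeeping of factorials and binomial coefficients coming from the normalization $(n-1)!(\sqrt{-1}/2)^{n-1}$ in the definition of $\Theta_{p\bar q}$, from the two surviving terms in $\omega^{n-1}$, and from the wedge $\varphi_0 \wedge \omega_N^{n-k}$. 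The constant $\binom{n-1}{n-k}$ in the definition of $\psi$ is chosen precisely so that the $T^k$--block acquires $F_{\varphi_0}$ with no spurious prefactor; once this is verified, no new analytic input beyond Lemma~\ref{le:toriext} is needed.
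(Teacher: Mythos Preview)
Your approach is exactly the paper's: set $\psi$ to be $\varphi \wedge \omega_N^{n-k}$ (up to a scalar), observe that $\omega_N^{n-k}$ is $\partial$-- and $\bar\partial$--closed for degree reasons, and read off the block--diagonal coefficient matrix so that the determinant equation on $M$ collapses to the one on $T^k$ already handled by Lemma~\ref{le:toriext}. The paper's proof simply writes $\psi = \omega_N^{n-k}\wedge\varphi$ and asserts the conclusion; you go further and track the combinatorics, inserting the factor $\binom{n-1}{n-k}$ so that the $T^k$--block is exactly $(\det g_N)(I_k + F_{\varphi_0})$ rather than $(\det g_N)\bigl(I_k + \binom{n-1}{n-k}^{-1} F_{\varphi_0}\bigr)$. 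Your normalization is the right one: without it the determinant would not come out to $\delta$, so your extra care genuinely fills a constant that the paper leaves implicit.
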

\begin{proof}
  Let
  \[
     \psi = \omega_N^{n-k} \wedge \varphi,
  \]
  where $\varphi$ is the $(k-2,k-2)$--form on $T^k$ obtained by Lemma~\ref{le:toriext}, i.e., $\varphi \in \mathcal{P}(\omega_0)$ satisfies that
  such that
      \[
         \det \left( \omega_0^{k-1} + \ppfr \varphi \right) = \delta \det (\omega_0^{k-1}).
      \]
      Then, obviously $\psi$ satisfies the requirement.
\end{proof}

\subsection{K\"ahler case}
In this subsection, we shall prove Theorem~\ref{th:CYunique}. Observe that it is sufficient to prove the following lemma.
\begin{lemm}
Let $(X,\omega_0)$ be a compact K\"ahler manifold. Consider
\[
   \det \left(\omega_0^{n-1} + \ppfr \varphi\right) = C_1 \det \omega_0^{n-1},
\]
where $\varphi \in \mathcal{P}(\omega_0)$, and $C_1>0$ is a constant. If $C_1 \ge 1$, then
\[
   \pp \varphi = 0.
\]
\end{lemm}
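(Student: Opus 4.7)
The plan is to run an AM--GM argument against the integral identity produced by Stokes' theorem, exploiting crucially that $\omega_0$ is K\"ahler. Writing $\omega$ for the (balanced) hermitian metric with $\omega^{n-1} = \omega_0^{n-1} + \ppfr\varphi$, the derivation already displayed in Section~1 gives
\[
   (\det\omega)^{n-1} = \det\bigl[\omega_0^{n-1} + \ppfr\varphi\bigr] = C_1 \det\omega_0^{n-1},
\]
hence $\omega^n / \omega_0^n = C_1^{1/(n-1)}$ pointwise. I would phrase the entire argument in terms of this volume ratio together with the mixed Monge--Amp\`ere form $\omega_0 \wedge \omega^{n-1}$.

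Step one is to apply Stokes. Because $\omega_0$ is K\"ahler, $\partial\omega_0 = \bar{\partial}\omega_0 = 0$, so $\omega_0 \wedge \ppfr\varphi = \ppfr(\omega_0 \wedge \varphi)$; integrating yields the clean identity
\[
   \int_X \omega_0 \wedge \omega^{n-1} = \int_X \omega_0^n.
\]
Step two is the pointwise AM--GM inequality. Since $\omega$ is a positive $(1,1)$-form, $\omega_0 \wedge \omega^{n-1} = \tfrac{1}{n}(\mathrm{tr}_\omega \omega_0)\,\omega^n$, and AM--GM applied to the eigenvalues of $\omega_0$ with respect to $\omega$ gives $\mathrm{tr}_\omega \omega_0 \ge n(\omega_0^n/\omega^n)^{1/n}$, so
\[
   \omega_0 \wedge \omega^{n-1} \ge (\omega_0^n)^{1/n}(\omega^n)^{(n-1)/n} = C_1^{1/n}\,\omega_0^n,
\]
where the last equality uses $\omega^n = C_1^{1/(n-1)}\omega_0^n$. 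Step three is to integrate and compare with Stokes: $\int_X \omega_0^n \ge C_1^{1/n}\int_X \omega_0^n$ forces $C_1 \le 1$, which combined with the hypothesis $C_1 \ge 1$ yields $C_1 = 1$. Step four is the equality case: $C_1 = 1$ forces equality in AM--GM at every point, so $\omega_0 = \lambda\omega$ for some positive function $\lambda$; then $\omega^n = \omega_0^n$ forces $\lambda \equiv 1$, hence $\omega = \omega_0$ and therefore $\pp\varphi = 0$.

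I do not expect a genuine obstacle in this proof; the only items that need care are verifying the sign/type computation that lets $\omega_0$ pass through $\partial\bar{\partial}$ (this uses K\"ahlerness of $\omega_0$, not just balancedness), and checking that the AM--GM equality case can indeed be extracted pointwise (which it can, since both sides are continuous and the integrated inequality is an equality). If one wanted to push beyond the K\"ahler hypothesis on $\omega_0$, the Stokes step would acquire error terms from $\partial\omega_0$ and $\bar{\partial}\omega_0$; that is where a generalization would become genuinely harder, but it is outside the scope of the stated lemma.
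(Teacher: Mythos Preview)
Your argument is correct and is essentially the same as the paper's: both wedge the equation against $\omega_0$, use the K\"ahler condition to integrate $\omega_0 \wedge \ppfr\varphi$ to zero, and compare with the pointwise AM--GM inequality to force $C_1 = 1$ and then equality. The only cosmetic difference is that the paper writes the AM--GM step in terms of the coefficient matrices $(\omega_0^{n-1})^{i\bar{j}}(\ppfr\varphi)_{i\bar{j}}$, whereas you phrase the same inequality as $\mathrm{tr}_\omega\omega_0 \ge n(\omega_0^n/\omega^n)^{1/n}$; unwinding the cofactor relation $(\omega_0^{n-1})_{i\bar{j}} = (\det\omega_0)\,g_0^{i\bar{j}}$ shows these are identical.
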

\begin{proof}
By a direct calculation, since $\omega_0$ is K\"ahler, we have
 \begin{eqnarray}\label{eq:001}
\int_X (\omega_0^{n-1})^{i\bar{j}} \Big(\ppfr \varphi\Big)_{i\bar{j}} \, \omega_0^n \notag
      = n\int_X \omega_0 \wedge \Big(\ppfr \varphi\Big) =0.
  \end{eqnarray}
Similar to the torus case, we apply the arithmetic--geometric mean inequality to obtain
  \begin{equation} \label{eq:agm}
  \begin{split}
     C_1^{1/n} & = \left[\frac{\det (\omega_{\varphi}^{n-1})}{\det (\omega_0^{n-1})}\right]^{1/n} \\
       & \le 1 + \frac{1}{n} \sum_{i,j} (\omega_0^{n-1})^{i\bar{j}} \Big(\ppfr \varphi\Big)_{i\bar{j}}.
  \end{split}
  \end{equation}
  Integrating over $X$ with respect to $\omega_0$ and using first equality yields that
  \[
     C_1^{1/n}\int_X \omega_0^n \le  \int_X \omega_0^n.
  \]
  This shows that $C_1=1$ and we must have a pointwise equality in \eqref{eq:agm}. This forces that
  \[
     \ppfr \varphi = 0.
  \]
\end{proof}

\section{Openness} \label{se:open}

Let $(X,\eta)$ be a K\"ahler manifold, and $\omega_0$ be a Hermitian metric on $X$. Given $f \in C^{\infty}(X)$, we would like to study the solution $\varphi \in \mathcal{P}(\omega_0)$ of the following equation
\begin{equation} \label{eq:ftCYKa}
    \frac{\omega_{\varphi}^n}{\omega_0^n} =  \frac{e^f}{V} \int_X \omega_{\varphi}^n.
\end{equation}
Here $\omega_{\varphi}$ is a positive $(1,1)$--form on $X$ such that
\[
  \omega_{\varphi}^{n-1} = \omega_0^{n-1} + \ppr \varphi,
\]
and
\[
   V = \int_X \omega_0^n.
\]
Equation~\eqref{eq:ftCYKa} is the same as \eqref{104}, which is equivalent to the form-type Calabi--Yau equation \eqref{eq:ftCY}. A compatibility condition for \eqref{eq:ftCYKa} is
\[
   \int_X e^f \omega_0^n = V.
\]

In what follows, we fix $k$ to be an integer greater than $n+3$, and fix a real number $\alpha$ with $0 < \alpha <1$. We denote by $C^{k,\alpha}(X)$ the usual H\"older space of real-valued functions on $X$. Recall that
\[
   \mathcal{F}^{k,\alpha}(X) = \left\{ g \in C^{k,\alpha}(X); \int_X e^g \, \omega_0^n = V \right \},
\]
which is a hypersurface in the Banach space $C^{k,\alpha}(X)$. For any $\psi$ contained in the intersection of $\mathcal{P}(\omega_0)$ and $C^{k+2,\alpha}(\Lambda^{n-2,n-2}(X))$,
\[
   M(\psi) \equiv \log \frac{\omega_{\psi}^n}{\omega_{0}^n} - \log \left(\frac{1}{V}\int_X \omega_{\psi}^n \right) \in \mathcal{F}^{k,\alpha}(X).
\]
By the map $M$, equation~\eqref{eq:ftCYKa} can be rewritten as
\[
   M(\varphi) = f.
\]

To prove Theorem~\ref{th:openKa}, we first compute the linearization of $M$.
\begin{prop} \label{pr:le}
   Let $G(\varphi) = \omega_{\varphi}^n$ for all $\varphi \in \mathcal{P}(\omega_0)$,
and denote by $G_{\varphi}$ the Fr\'echet derivative of $G$ at $\varphi$. Then, given $\varphi \in \mathcal{P}(\omega_0)$, we have
   \begin{equation*}
            G_{\varphi}(\psi)=\frac {n\sqrt{-1}}{2(n-1)}
            \partial\bar\partial\psi\wedge\omega_\varphi,
    \end{equation*}
    for all $\psi \in C^{k+2,\alpha}(\Lambda^{n-2,n-2}(X))$.
\end{prop}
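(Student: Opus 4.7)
The plan is to exploit the defining relation $\omega_\varphi^{n-1} = \omega_0^{n-1} + \tfrac{\sqrt{-1}}{2}\partial\bar\partial\varphi$ and differentiate it directly, using it to convert the abstract derivative of $\omega_\varphi$ into the known $\partial\bar\partial\psi$.

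First I would remark that the assignment $\varphi \mapsto \omega_\varphi$ is well-defined and smooth (between appropriate Banach spaces) on $\mathcal{P}(\omega_0)$: a positive $(n-1,n-1)$-form admits a unique positive $(1,1)$-form $(n-1)$-th root, and the root depends smoothly on the form by the implicit function theorem applied pointwise to the map $\omega \mapsto \omega^{n-1}$ on positive Hermitian matrices (whose linearization, computed below, is invertible). Let me write $\omega = \omega_\varphi$ and let $\omega' = \frac{d}{dt}\big|_{t=0}\omega_{\varphi + t\psi}$ denote its Fr\'echet derivative at $\varphi$ in direction $\psi$; this $\omega'$ is a real $(1,1)$-form.

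Next I would differentiate the two sides of $\omega^{n-1} = \omega_0^{n-1} + \tfrac{\sqrt{-1}}{2}\partial\bar\partial\varphi$ in the direction $\psi$, obtaining
\begin{equation*}
   (n-1)\,\omega^{n-2}\wedge \omega' \;=\; \tfrac{\sqrt{-1}}{2}\,\partial\bar\partial\psi.
\end{equation*}
Wedging this identity with $\omega$ (and noting $\omega \wedge \partial\bar\partial\psi = \partial\bar\partial\psi\wedge \omega$ since the bidegrees commute) yields
\begin{equation*}
   (n-1)\,\omega^{n-1}\wedge \omega' \;=\; \tfrac{\sqrt{-1}}{2}\,\partial\bar\partial\psi\wedge \omega_\varphi.
\end{equation*}
Since $G(\varphi) = \omega^n$, its Fr\'echet derivative is $G_\varphi(\psi) = n\,\omega^{n-1}\wedge \omega'$, and combining with the previous display gives the claimed formula
\begin{equation*}
   G_\varphi(\psi) \;=\; \frac{n\sqrt{-1}}{2(n-1)}\,\partial\bar\partial\psi\wedge \omega_\varphi.
\end{equation*}

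There is essentially no obstacle beyond justifying the smoothness of $\varphi \mapsto \omega_\varphi$; the algebraic trick of wedging the differentiated identity with $\omega_\varphi$ to eliminate the unknown $\omega'$ is what makes the calculation clean. I expect the only subtlety worth double-checking is the sign/coefficient bookkeeping in the wedge manipulation, which is straightforward because $\partial\bar\partial\psi$ has even total degree $2(n-1)$ and thus commutes with the $(1,1)$-form $\omega_\varphi$.
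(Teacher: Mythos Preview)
Your proof is correct and follows essentially the same approach as the paper: both differentiate the defining relation for $\omega_\varphi^{n-1}$ and combine it with the chain-rule expression $G_\varphi(\psi) = n\,\omega_\varphi^{n-1}\wedge \omega'$ to eliminate the unknown $\omega'$. The paper organizes this slightly differently (differentiating $\omega_\varphi^n = \omega_\varphi^{n-1}\wedge\omega_\varphi$ via the product rule and comparing with the direct chain-rule expression), but the resulting identity $(n-1)\,\omega_\varphi^{n-1}\wedge\omega' = \tfrac{\sqrt{-1}}{2}\partial\bar\partial\psi\wedge\omega_\varphi$ is the same in both arguments.
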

\begin{proof}
    For any real $(n-2,n-2)$--form $\psi$,
    \begin{align}
        G_{\varphi}(\psi)
        & =\left.\frac{d}{ds}\left(\omega_{\varphi+s\psi}^n\right)\right|_{s=0} \notag \\
        & =n\omega_\varphi^{n-1}\wedge \left.\frac{d}{ds}(\omega_{\varphi+s\psi})\right|_{s=0} \label{eq:lin1}\\
        &=\left.\frac{d}{ds}(\omega^{n-1}_{\varphi+s\psi})\right|_{s=0}\wedge\omega_\varphi+\omega_\varphi^{n-1}\wedge
\left.\frac{d}{ds}(\omega_{\varphi+s\psi})\right|_{s=0} \notag \\
        & =(\sqrt{-1}/2)
\partial\bar\partial\psi\wedge\omega_{\varphi}+\omega_\varphi^{n-1}\wedge\left.\frac{d}{ds}(\omega_{\varphi+s\psi})\right|_{s=0}. \label{eq:lin2}
\end{align}
    Comparing \eqref{eq:lin1}  with \eqref{eq:lin2}, we obtain that
    \begin{equation*}
        G_{\varphi}(\psi)=\frac {n}{n-1}(\sqrt{-1}/2)\partial\bar\partial\psi\wedge\omega_\varphi.
    \end{equation*}
\end{proof}
\begin{coro} \label{co:le}
  For any $\varphi \in \mathcal{P}(\omega_0)$, the Fr\'echet derivative of $M$ at $\varphi$ is given by
  \[
   M_{\varphi}(\psi) = \frac{n(\sqrt{-1}/2)
\partial\bar\partial\psi\wedge\omega_\varphi}{(n-1)\omega_{\varphi}^n} - \frac{n \int_X (\sqrt{-1}/2)\partial\bar\partial \psi \wedge \omega_{\varphi}}{(n-1)\int_X \omega_{\varphi}^n},
\]
for all $\psi \in C^{k+2,\alpha}(\Lambda^{n-2,n-2}(X))$.
\end{coro}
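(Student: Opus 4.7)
The plan is to derive Corollary~\ref{co:le} directly from Proposition~\ref{pr:le} via the chain rule, treating $M$ as a composition built from the map $G(\psi) = \omega_\psi^n$ whose linearization is already in hand. To that end, I would rewrite
\[
   M(\psi) = \log G(\psi) - \log \omega_0^n - \log\Bigl(\tfrac{1}{V}\int_X G(\psi)\Bigr),
\]
so that only two terms depend on $\psi$: the pointwise $\log G(\psi)$ and the integral $\log\!\int_X G(\psi)$. The constant term $-\log \omega_0^n$ and the factor $1/V$ contribute nothing to the derivative.

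Next, I would compute the Fr\'echet derivative termwise. For the pointwise term, the chain rule gives
\[
   \frac{d}{ds}\Big|_{s=0}\!\log G(\varphi+s\psi) \;=\; \frac{G_\varphi(\psi)}{G(\varphi)} \;=\; \frac{G_\varphi(\psi)}{\omega_\varphi^n},
\]
and plugging in Proposition~\ref{pr:le} produces the first term on the right-hand side of Corollary~\ref{co:le}. For the integral term, differentiation under the integral sign (justified since $s\mapsto G(\varphi+s\psi)$ is polynomial in $s$, hence smooth, and $X$ is compact) yields
\[
   \frac{d}{ds}\Big|_{s=0}\!\log\Bigl(\tfrac{1}{V}\int_X G(\varphi+s\psi)\Bigr) \;=\; \frac{\int_X G_\varphi(\psi)}{\int_X \omega_\varphi^n},
\]
and again applying Proposition~\ref{pr:le} yields the second term.

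Subtracting gives precisely the stated formula. I expect no genuine obstacle here: the statement is essentially a chain-rule bookkeeping exercise on top of Proposition~\ref{pr:le}. The only minor items to verify are that $G$ is a polynomial map from the affine space of admissible $(n-2,n-2)$-forms into top forms (so the Fr\'echet derivative coincides with the $s$-derivative computed above), and that $M$ maps into $\mathcal{F}^{k,\alpha}(X)$, so that $M_\varphi(\psi)$ automatically has vanishing integral against $e^{M(\varphi)}\omega_0^n$—a consistency check that is not needed for the formula but reassures the reader.
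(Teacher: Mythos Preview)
Your approach is correct and matches the paper's: the corollary is stated there without proof, as an immediate consequence of Proposition~\ref{pr:le} via the chain rule applied to $M = \log G - \log \omega_0^n - \log\bigl(\tfrac{1}{V}\int_X G\bigr)$, exactly as you outline.

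One small correction: you write that $s \mapsto G(\varphi + s\psi)$ is polynomial in $s$, but this is not quite right. While $\omega_{\varphi+s\psi}^{n-1}$ is affine in $s$, the top form $\omega_{\varphi+s\psi}^n$ involves taking an $(n-1)$-th root (equivalently, $\det(\Psi_{\varphi+s\psi})^{1/(n-1)}$), so $G(\varphi+s\psi)$ is smooth but not polynomial in $s$. This does not affect your argument: smoothness in $s$ together with compactness of $X$ still justifies differentiation under the integral sign, and the Fr\'echet derivative of $G$ is already supplied by Proposition~\ref{pr:le}. Just adjust the stated justification accordingly.
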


Next, we recall the Local Surjectivity Theorem (see \cite[p. 175 and p. 108]{AMR}, for example).
\begin{LST}
Let $\mathcal{E}$ and $\mathcal{F}$ be Banach manifolds, and $U \subset
\mathcal{E}$ be an open subset. If $\mathfrak{F} : U \to
\mathcal{F}$ is a $C^1$ map, and $\mathfrak{F}_{\xi} \equiv
D\mathfrak{F}(\xi)$ is onto from the tangent space $T_{\xi} \mathcal{E}$
to the tangent space $T_{\mathfrak{F}(\xi)} \mathcal{F}$, then
$\mathfrak{F}$ is locally onto; that is, there exist open
neighborhoods $U_1$ of $\xi$ and $V_1$ of $\mathfrak{F}(\xi)$ such
that $\mathfrak{F}|_{U_1} : U_1 \to  V_1$ is onto.
\end{LST}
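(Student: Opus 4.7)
The plan is to reduce the statement to a local problem in Banach spaces, and then apply the standard Inverse Function Theorem on a complement of $\ker D\mathfrak{F}(\xi)$.

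First, I would use local charts on the Banach manifolds $\mathcal{E}$ near $\xi$ and $\mathcal{F}$ near $\mathfrak{F}(\xi)$ to reduce to the following setup: $\mathcal{E}$ and $\mathcal{F}$ are Banach spaces, $\xi = 0 \in \mathcal{E}$, $\mathfrak{F}(0) = 0 \in \mathcal{F}$, $U$ is an open ball around $0$, and $\mathfrak{F} : U \to \mathcal{F}$ is $C^1$. Set $L := D\mathfrak{F}(0)$; by hypothesis, $L : \mathcal{E} \to \mathcal{F}$ is a bounded surjective linear operator. Proving local surjectivity in this linearized-at-origin setting is equivalent to the manifold statement, since chart maps are homeomorphisms.

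Second, I would produce a continuous right inverse to $L$. The AMR framework tacitly places us in the setting where $L$ is a \emph{split surjection}, that is, $\ker L$ admits a closed topological complement $\mathcal{E}_0 \subset \mathcal{E}$ with $\mathcal{E} = \ker L \oplus \mathcal{E}_0$. Restriction gives a continuous linear bijection $L|_{\mathcal{E}_0} : \mathcal{E}_0 \to \mathcal{F}$, which by the Open Mapping Theorem is a topological isomorphism. Its inverse is a bounded right inverse $R : \mathcal{F} \to \mathcal{E}_0 \subset \mathcal{E}$ with $LR = I_{\mathcal{F}}$.

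Third, I would apply the Banach Inverse Function Theorem to $\mathfrak{G} := \mathfrak{F}|_{\mathcal{E}_0 \cap U}$. Its Fr\'echet derivative at the origin equals $L|_{\mathcal{E}_0}$, which is a topological isomorphism by the previous step. The IFT then produces open neighborhoods $\tilde U$ of $0$ in $\mathcal{E}_0$ and $V_1$ of $0$ in $\mathcal{F}$ so that $\mathfrak{G} : \tilde U \to V_1$ is a $C^1$ diffeomorphism. Choosing $U_1$ to be any open neighborhood of $0$ in $\mathcal{E}$ containing $\tilde U$ (for example, a product ball in $\ker L \oplus \mathcal{E}_0$) gives $\mathfrak{F}(U_1) \supset \mathfrak{G}(\tilde U) = V_1$, which is the required local surjectivity; pulling back through the original charts produces the claimed neighborhoods $U_1$ of $\xi$ and $V_1$ of $\mathfrak{F}(\xi)$.

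The main obstacle is the splittability of $\ker L$: in a general Banach space, a bounded surjective linear operator need not be a split surjection, so the clean reduction to the Inverse Function Theorem above is not always available. If one wishes to avoid any splitting hypothesis, the second step must be replaced by a Graves--Lyusternik iteration: given $\eta$ small in $\mathcal{F}$, inductively choose $h_n \in \mathcal{E}$ with $L h_n = \eta - \mathfrak{F}(\xi_n)$ and $\|h_n\| \le C \|\eta - \mathfrak{F}(\xi_n)\|$ (where $C$ is the constant furnished by the quantitative Open Mapping Theorem), and set $\xi_{n+1} = \xi_n + h_n$. The $C^1$ estimate $\mathfrak{F}(\xi + h) - \mathfrak{F}(\xi) - Lh = o(\|h\|)$ on a small ball then forces the residuals to decay geometrically, and the sequence converges to a limit $\xi_\infty$ with $\mathfrak{F}(\xi_\infty) = \eta$. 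This route covers the fully general Banach case at the cost of more delicate a priori estimates than the split-surjection argument above.
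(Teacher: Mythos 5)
This statement is not proved in the paper at all: it is quoted verbatim as a known result, with a pointer to Marsden--Ratiu \cite{AMR}, and then simply applied. So there is no in-paper argument to compare against; what matters is whether your blind proof is sound, and it essentially is. Your main route (charts to reduce to Banach spaces, a closed complement $\mathcal{E}_0$ of $\ker L$, the Open Mapping Theorem to make $L|_{\mathcal{E}_0}$ an isomorphism, then the Inverse Function Theorem on $\mathfrak{F}|_{\mathcal{E}_0\cap U}$) is exactly the standard proof behind the AMR citation, and you are right to flag that it silently uses that $L$ is a \emph{split} surjection --- equivalently, that $L$ admits a bounded linear right inverse; your Graves--Lyusternik iteration correctly removes that hypothesis and proves the theorem as literally stated, since the quantitative Open Mapping Theorem supplies the needed bounded (nonlinear) selection $h\mapsto Lh=y$, $\|h\|\le C\|y\|$, and continuity of $D\mathfrak{F}$ gives the uniform $o(\|h\|)$ estimate that makes the residuals contract. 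Two small cosmetic points: in the split case you should take $U_1$ to be an open neighborhood of $\xi$ contained in $\mathfrak{F}^{-1}(V_1)$ (e.g.\ intersect your product ball with $\mathfrak{F}^{-1}(V_1)$, which still contains $\tilde U$) so that $\mathfrak{F}|_{U_1}$ genuinely maps \emph{into} $V_1$ as the statement requires; and it is worth noting that in this paper's application the splitting issue is vacuous, because Lemma~\ref{le:ism} explicitly constructs a bounded right inverse of $M_{\varphi}$ on the subspace of forms $u\eta^{n-2}$, which is precisely the complement on which your Inverse Function Theorem argument runs.
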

Thus, to show Theorem~\ref{th:openKa}, it suffices to show that the linearization $M_{\varphi}$ is surjective from $C^{k+2,\alpha}(\Lambda^{n-2,n-2}(X))$ to $T_f \mathcal{F}^{k,\alpha}(X)$, which denotes the tangent space of $\mathcal{F}^{k,\alpha}(X)$ at $f$. Now let us introduce the space
\[
   \mathcal{E}^{k,\alpha}(X) = \left\{ h \in C^{k,\alpha}(X); \int_X h \, \omega_{\varphi}^n = 0 \right\}.
\]
Note that $\mathcal{E}^{k,\alpha}(X)$ is itself a Banach space, as a closed subspace in $C^{k,\alpha}(X)$. There is another point of view: We can define an equivalence relation on the elements in $C^{k,\alpha}(X)$ by
\[
   h \sim g \quad \mbox{if and only if $h - g \equiv$ some constant.}
\]
In this regard, $\mathcal{E}^{k,\alpha}(X) = C^{k,\alpha}(X)/\sim$. Observe that
\[
   T_f \mathcal{F}^{k,\alpha}(X) =  \mathcal{E}^{k,\alpha}(X).
\]

To prove the surjectivity of $M_{\varphi}$, we consider a special class of the $(n-2,n-2)$--forms, that is,
\begin{equation} \label{eq:ueta}
   \psi = u \eta^{n-2},  \qquad \textup{where $u \in \mathcal{E}^{k+2,\alpha}(X)$}.
\end{equation}
We recall that $\eta$ is the K\"ahler metric on $X$. For simplicity we denote
\[
   L(u) = M_{\varphi}(u\eta^{n-2}).
\]
Then, by Corollary~\ref{co:le},
\begin{equation} \label{eq:Linear}
    L u  = \frac{n(\sqrt{-1}/2)\p \bar{\p} u \wedge \eta^{n-2} \wedge \omega_{\varphi}}{(n-1)\omega_{\varphi}^n}
    - \frac{n \int_X \ppr u \wedge \eta^{n-2} \wedge \omega_{\varphi}}{(n-1)\int_X \omega_{\varphi}^n}.
\end{equation}
We shall prove the following result:
\begin{lemm} \label{le:ism}
Let $k \ge n + 4$, and $0 < \alpha < 1$. For any $h \in \mathcal{E}^{k,\alpha}(X)$, there exists a unique function $u \in \mathcal{E}^{k+2,\alpha}(X)$  satisfying that
\begin{equation} \label{eq:KaL}
   L u = h.
\end{equation}
\end{lemm}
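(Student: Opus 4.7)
The plan is to prove Lemma~\ref{le:ism} via the Fredholm alternative. From the defining formula \eqref{eq:Linear}, we decompose
\[
  L u = P u - c(u), \qquad P u = \frac{n\,\ppr u \wedge \eta^{n-2} \wedge \omega_{\varphi}}{(n-1)\,\omega_{\varphi}^{n}}, \qquad c(u) = \frac{n \int_X \ppr u \wedge \eta^{n-2} \wedge \omega_{\varphi}}{(n-1) \int_X \omega_{\varphi}^{n}}.
\]
Here $P$ is a scalar second-order linear differential operator with smooth coefficients and no zeroth-order term. Since $\eta^{n-2}\wedge\omega_{\varphi}$ is a strictly positive $(n-1,n-1)$-form, Michelsohn's theorem \cite{Mic} furnishes a unique positive $(1,1)$-form $\tilde{\omega}$ with $\tilde{\omega}^{n-1}=\eta^{n-2}\wedge\omega_{\varphi}$; combined with the pointwise identity $\ppr u\wedge\tilde{\omega}^{n-1}=\tfrac{1}{n}(\tilde{\Delta}u)\,\tilde{\omega}^{n}$, one obtains $Pu = \frac{\tilde{\omega}^{n}}{(n-1)\,\omega_{\varphi}^{n}}\,\tilde{\Delta}u$, a positive multiple of the Hermitian Laplacian $\tilde{\Delta}$ of $\tilde{\omega}$; in particular $P$ is elliptic. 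The functional $u\mapsto c(u)$ is a continuous linear map into the constants, hence a rank-one and therefore compact map from $C^{k+2,\alpha}(X)$ to $C^{k,\alpha}(X)$.

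Injectivity of $L$ on $\mathcal{E}^{k+2,\alpha}(X)$ follows from the maximum principle. Assume $Lu = 0$; then $Pu$ is identically the constant $c(u)$. At an interior maximum of $u$ the complex Hessian $\ppr u$ is nonpositive as a Hermitian $(1,1)$-form, so $Pu\le 0$ there; at a minimum $Pu\ge 0$. Hence $c(u)=0$ and $Pu = 0$ on $X$. The strong maximum principle applied to the elliptic operator $P$ then forces $u$ to be constant, and the mean-zero condition $\int_X u\,\omega_{\varphi}^{n}=0$ in the definition of $\mathcal{E}^{k+2,\alpha}(X)$ yields $u\equiv 0$.

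For surjectivity, the map $L : C^{k+2,\alpha}(X)\to C^{k,\alpha}(X)$ is Fredholm as a compact perturbation of the elliptic operator $P$, and has the same Fredholm index, which is zero (any scalar second-order elliptic operator on a closed manifold has index zero). Since $\ker L = \mathbb{R}\cdot 1$ by the previous paragraph, the image $L(C^{k+2,\alpha}(X))$ has codimension exactly one in $C^{k,\alpha}(X)$. On the other hand, integrating $Lu$ against $\omega_{\varphi}^{n}$ yields zero by the definition of $c(u)$, so $L(C^{k+2,\alpha}(X))\subseteq \mathcal{E}^{k,\alpha}(X)$; since $\mathcal{E}^{k,\alpha}(X)$ also has codimension one in $C^{k,\alpha}(X)$, the two coincide. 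Given $h\in\mathcal{E}^{k,\alpha}(X)$ we therefore obtain $u\in C^{k+2,\alpha}(X)$ with $Lu = h$, and subtracting the $\omega_{\varphi}^{n}$-average of $u$ produces the desired solution in $\mathcal{E}^{k+2,\alpha}(X)$. Uniqueness is immediate from the injectivity already established.

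The main obstacle is the careful application of the maximum principles to $P$, which is a positive multiple of the Hermitian Laplacian $\tilde{\Delta}$ but is not self-adjoint with respect to any natural inner product (unless $\tilde{\omega}$ happens to be K\"ahler); the weak and strong maximum principles nonetheless apply because $P$ is a linear second-order elliptic operator with no zeroth-order term, so the argument above goes through.
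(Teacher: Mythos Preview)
Your argument is correct and takes a genuinely different route from the paper. The paper works in the Hilbert-space setting: it introduces the bilinear form
\[
  A(u,v) = \frac{n\sqrt{-1}}{4(n-1)}\int_X \eta^{n-2}\wedge\omega_{\varphi}\wedge(\partial u\wedge\bar{\partial}v + \partial v\wedge\bar{\partial}u) + \textup{(lower order)},
\]
applies Lax--Milgram to $A(\cdot,\cdot)+\gamma\langle\cdot,\cdot\rangle_{L^2}$ to obtain a weak solution of $-Lu+\gamma u=h$, and then uses the Fredholm alternative for the compact resolvent $L_{\gamma}^{-1}:\mathcal{L}\to\mathcal{L}$ together with a weak-maximum-principle argument in $W^{1,2}$ to conclude. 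Regularity is obtained afterwards by the chain $W^{k,2}\Rightarrow W^{k+2,2}\Rightarrow C^{3}\Rightarrow C^{k+2,\alpha}$, and this Sobolev step is precisely why the hypothesis $k\ge n+4$ appears.

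Your approach bypasses the weak formulation entirely. By invoking Michelsohn's $(n-1)$-root to write $Pu$ as a positive multiple of a Hermitian Laplacian, you work directly with the Schauder--Fredholm package on $C^{k+2,\alpha}\to C^{k,\alpha}$: ellipticity, index zero via homotopy to the Laplacian, compactness of the rank-one piece $c(\cdot)$, and the classical strong maximum principle. This is cleaner in that it never leaves the H\"older category, and in fact it does not use the constraint $k\ge n+4$ at all; your proof goes through for any $k\ge 0$ once the coefficients of $P$ (which lie in $C^{k,\alpha}$ since $\omega_{\varphi}\in C^{k,\alpha}$) are smooth enough for Schauder theory. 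The paper's approach, on the other hand, avoids appealing to index theory and to Michelsohn's root, and is closer in spirit to the standard PDE machinery of \cite{GT,Evans}.
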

Lemma~\ref{le:ism} implies that $M_{\varphi}: C^{k+2,\alpha}(\Lambda^{n-2,n-2}(X)) \to \mathcal{E}^{k,\alpha}(X)$ is surjective, and hence, Theorem~\ref{th:openKa} follows.

The rest of this section is devoted to prove Lemma~\ref{le:ism}. We denote by $W^{k,p}(\Omega, \omega_{\varphi})$ the usual Sobolev space with respect to $\omega_{\varphi}$ on a domain $\Omega$ in $X$.
In the rest of this section, we may denote $W^{k,p}(\Omega) = W^{k,p}(\Omega, \omega_{\varphi})$ for simplicity; furthermore, when $\Omega = X$, we abbreviate $W^{k,p} = W^{k,p}(X) = W^{k,p}(X,\omega_{\varphi})$. Notice that $W^{0,2}(X) \equiv L^2(X)$.



We introduce the following spaces:
\[
   \mathcal{H}  = \left\{ v \in W^{1,2}(X); \int_X v \; \omega_{\varphi}^n = 0 \right\},
\]
and
\[
     \mathcal{L} = \left\{ v \in L^2(X); \int_X v \; \omega_{\varphi}^n = 0 \right\}.
\]
Clearly, $\mathcal{H}$ and $\mathcal{L}$ are Hilbert spaces, as closed subspaces in $W^{1,2}(X)$ and $L^2(X)$, respectively.
We define a bilinear map $A : \mathcal{H} \times \mathcal{H} \to \mathbb{R}$ by
\[
  \begin{split}
   A (u, v)
   & = \frac{n\sqrt{-1}}{4(n-1)} \int_X  \eta^{n-2} \wedge \omega_{\varphi} \wedge \big(\p u \wedge \bar{\p} v + \p v \wedge \bar{\p} u\big) \\
   &  \quad + \frac{n\sqrt{-1}}{4(n-1)} \int_X v \eta^{n-2} \wedge \big(\p u \wedge \bar{\p}\omega_{\varphi} + \p \omega_{\varphi} \wedge \bar{\p} u \big).
  \end{split}
\]
\begin{defi}
Given $h \in \mathcal{L}$, we say that $u \in \mathcal{H}$ is a \emph{weak} solution of the equation
\begin{equation} \label{eq:weakKaLh}
   - L u = h,
\end{equation}
if $u$ satisfies that
\begin{equation} \label{eq:weakA}
   A(u, v) = \int_X h v \, \omega_{\varphi}^n \equiv  \langle h, v \rangle_{L^2}, \qquad \textup{for all $v \in \mathcal{H}$}.
\end{equation}
\end{defi}
Let us remark that, if $u$ is a \emph{classical} solution of \eqref{eq:weakKaLh}, i.e., $u \in C^2(X)$, then one can obtain \eqref{eq:weakA} by integrating \eqref{eq:weakKaLh} by parts with respect to $\omega_{\varphi}^n$. Conversely, we have the following result:
\begin{prop} \label{pr:strong}
 If $u \in C^3(X)$ satisfies \eqref{eq:weakA} for some $h \in C^1(X)\cap \mathcal{L}$, then
\[
   - Lu = h.
\]
\end{prop}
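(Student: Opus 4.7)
The plan is to derive $-Lu = h$ pointwise from the weak identity by integrating by parts on the bilinear form $A(u,v)$ now that $u$ has the regularity to support a second derivative, and then invoking a density/orthogonality argument to conclude.

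First I would carry out Stokes/integration by parts on each of the two ``gradient'' terms in $A(u,v)$, moving a derivative from $v$ onto $u$. Since $X$ is compact, closed $(2n-1)$-forms integrate to zero, and since $\eta$ is K\"ahler we have $\p\eta^{n-2} = \bar{\p}\eta^{n-2} = 0$; the only torsion terms come from differentiating $\omega_\varphi$. A brief computation using $\bar{\p}\p u = -\p\bar{\p}u$ gives
\begin{align*}
\int_X \p u\wedge\bar{\p}v\wedge\eta^{n-2}\wedge\omega_\varphi
&= -\int_X v\,\p\bar{\p}u\wedge\eta^{n-2}\wedge\omega_\varphi - \int_X v\,\p u\wedge\eta^{n-2}\wedge\bar{\p}\omega_\varphi,\\
\int_X \p v\wedge\bar{\p}u\wedge\eta^{n-2}\wedge\omega_\varphi
&= -\int_X v\,\p\bar{\p}u\wedge\eta^{n-2}\wedge\omega_\varphi + \int_X v\,\bar{\p}u\wedge\eta^{n-2}\wedge\p\omega_\varphi.
\end{align*}
The key observation is that the torsion pieces produced here exactly match (up to a sign) the second line in the definition of $A$. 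Summing everything, the $\p\omega_\varphi$ and $\bar{\p}\omega_\varphi$ terms cancel and one is left with
\[
A(u,v) = -\frac{n\sqrt{-1}}{2(n-1)}\int_X v\,\p\bar{\p}u\wedge\eta^{n-2}\wedge\omega_\varphi.
\]

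Next I would compute $\int_X(-Lu)\,v\,\omega_\varphi^n$ directly from the formula for $Lu$ in \eqref{eq:Linear}. The constant second term in $Lu$ disappears when paired with any $v\in \mathcal{H}$ because $\int_X v\,\omega_\varphi^n = 0$, so
\[
\int_X(-Lu)\,v\,\omega_\varphi^n = -\frac{n\sqrt{-1}}{2(n-1)}\int_X v\,\p\bar{\p}u\wedge\eta^{n-2}\wedge\omega_\varphi,
\]
which agrees with the expression for $A(u,v)$ obtained above. Combined with the weak identity \eqref{eq:weakA}, this yields
\[
\int_X\big(-Lu - h\big)\,v\,\omega_\varphi^n = 0 \qquad \textup{for all $v\in\mathcal{H}$.}
\]

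To finish, I would use that $\mathcal{H}$ is dense in $\mathcal{L}$ in $L^2(X,\omega_\varphi^n)$, so the $L^2$-orthogonal complement of $\mathcal{H}$ in $L^2(X)$ consists of constants; hence $-Lu - h$ is a constant. Finally, a direct calculation shows $\int_X Lu\,\omega_\varphi^n = 0$ (the bulk and correction terms in the definition of $L$ cancel by construction), and $\int_X h\,\omega_\varphi^n = 0$ because $h\in\mathcal{L}$. Therefore the constant has zero integral and must vanish, giving $-Lu = h$ pointwise. The main technical obstacle is purely bookkeeping in the integration-by-parts step: one must track bidegrees and signs carefully and recognize that the torsion contributions produced by $d\omega_\varphi \neq 0$ are precisely what the extra $v\,\eta^{n-2}\wedge(\p u\wedge\bar{\p}\omega_\varphi + \p\omega_\varphi\wedge\bar{\p}u)$ terms in $A$ were designed to absorb, making $A$ symmetric and yielding a classical strong formulation.
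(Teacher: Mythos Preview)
Your argument is correct and follows the same route as the paper. The paper absorbs your integration-by-parts computation into the remark immediately preceding the proposition and then proves only the orthogonality step (your ``$\chi$ orthogonal to $\mathcal{H}$ implies $\chi$ constant'') by testing against $v=\chi-\tfrac{\int_X\chi\,\omega_\varphi^n}{\int_X\omega_\varphi^n}$; your final integration to kill the constant is identical to theirs.
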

\begin{proof}
 First, 
 we claim the following fact: If $\chi \in C^{1}(X)$ satisfy that
   \begin{equation} \label{eq:const}
     \int_X \chi v \, \omega_{\varphi}^n = 0, \qquad \textup{for all $v \in \mathcal{H}$},
   \end{equation}
   then $\chi$ is a constant function on $X$. To see this, let
   \[
      v = \chi - \frac{\int_X \chi \omega_{\varphi}^n}{\int_X \omega_{\varphi}^n};
   \]
   then $v \in \mathcal{H}$ and \eqref{eq:const} implies that
   \[
      \int_X |v|^2 \omega_{\varphi}^n = 0.
   \]
   This proves the claim. It follows that
\[
    \frac{n(\sqrt{-1}/2)\p \bar{\p} u \wedge \eta^{n-2} \wedge \omega_{\varphi}}{(n-1)\omega_{\varphi}^n}
    - h = \mbox{some constant}.
\]
Thus, integrating with respect to $\omega_{\varphi}^n$ yields the result.
\end{proof}
The following weak maximum principle is similar to that on a domain in the Euclidean space (see, for example, Gilbarg--Trudinger~\cite[p. 179]{GT}). Proposition~\ref{pr:weakmax} is trivial, if $d \omega_{\varphi} = 0$.
\begin{prop} \label{pr:weakmax}
   Suppose that $u \in \mathcal{H}$ satisfies
   \begin{equation} \label{eq:weakmax}
      A(u, v) = 0, \qquad \textup{for all $v \in \mathcal{H}$}.
   \end{equation}
   Then, $u = 0$.
\end{prop}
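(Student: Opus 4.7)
The plan is to combine elliptic regularity with the strong form derived in Proposition~\ref{pr:strong} and the classical maximum principle, rather than testing $A$ against $u$ itself. The naive test $v = u$, which works immediately in the K\"ahler case $d\omega_\varphi = 0$, fails in general: the second term of $A$ then contributes a lower-order piece of indefinite sign, and sidestepping this is the main difficulty.

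First I would upgrade $u \in \mathcal{H} \subset W^{1,2}(X)$ to a smooth function. The bilinear form $A(\cdot,\cdot)$ is the weak formulation of a second-order linear elliptic equation on $X$, with principal symbol $T^{i\bar j}\xi_i\bar\xi_j$, where $T^{i\bar j}$ is the positive-definite Hermitian matrix obtained by pairing $\partial\bar\partial$ against the positive $(n-1,n-1)$-form $\eta^{n-2}\wedge\omega_\varphi$; all remaining coefficients are smooth. A standard $W^{k,2}$ bootstrap with zero source, followed by Sobolev embedding, yields $u \in C^{\infty}(X)$. Applying Proposition~\ref{pr:strong} with $h = 0 \in C^1(X)\cap\mathcal{L}$ then gives $-Lu = 0$, which after unpacking the definition of $L$ reads
\[
\frac{n(\sqrt{-1}/2)\partial\bar\partial u\wedge\eta^{n-2}\wedge\omega_\varphi}{(n-1)\omega_\varphi^n} = \frac{n\int_X (\sqrt{-1}/2)\partial\bar\partial u\wedge\eta^{n-2}\wedge\omega_\varphi}{(n-1)\int_X \omega_\varphi^n}.
\]
Denote the left-hand side by $L_0 u$; the right-hand side is a real constant, which I denote by $c$. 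The crucial observation is that $L_0$ is \emph{purely} second-order in $u$: because $\ppr u\wedge\eta^{n-2}\wedge\omega_\varphi$ involves only the mixed partial derivatives $u_{i\bar j}$, in local coordinates $L_0 u = T^{i\bar j}(x)\,u_{i\bar j}$ with $T^{i\bar j}$ smooth, Hermitian positive-definite, and independent of $u$.

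Finally I apply the maximum principle. At an interior maximum point $p$ of $u$, the Hermitian matrix $(u_{i\bar j}(p))$ is negative semidefinite, so $c = T^{i\bar j}(p)\,u_{i\bar j}(p) \le 0$; at an interior minimum the reverse inequality gives $c \ge 0$. Hence $c = 0$ and $L_0 u \equiv 0$ on $X$. Since $L_0$ is a real, uniformly elliptic, second-order operator with neither first- nor zeroth-order terms, the strong maximum principle (Hopf) forces $u$ to be constant; the constraint $\int_X u\,\omega_\varphi^n = 0$ coming from $u \in \mathcal{H}$ then pins that constant to $0$.
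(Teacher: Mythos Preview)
Your proof is correct and follows a genuinely different route from the paper's. The paper works entirely at the weak level: it tests against $v=(u-\delta)^+$ minus its $\omega_\varphi$-average, obtains from $A(u,v)=0$ the estimate $\|\nabla v\|_{L^2}\le C\|v\|_{L^2(\Gamma)}$ on the support $\Gamma$ of $dv$, and combines this with Sobolev--Poincar\'e to get a uniform lower bound $|\{u>\delta,\,|du|>0\}|\ge C^{-1}$ independent of $\delta$; letting $\delta\to\sup u$ contradicts Lemma~7.7 of Gilbarg--Trudinger. You instead bootstrap $u$ to $C^\infty$ by elliptic regularity, invoke Proposition~\ref{pr:strong} with $h=0$ to obtain the pointwise equation $L_0u=c$, and finish with the classical strong maximum principle for second-order operators without lower-order terms. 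One small imprecision: since the hypothesis gives $A(u,v)=0$ only for $v$ of zero $\omega_\varphi$-mean, for a general test function $v$ one has $A(u,v)=\bar v\cdot A(u,1)$, so the source in your regularity step is the constant $A(u,1)/\int_X\omega_\varphi^n$ rather than zero; this of course does not affect the bootstrap. The paper's argument is self-contained at the $W^{1,2}$ level and never appeals to regularity, while yours is shorter once regularity is in hand and reuses tools (Proposition~\ref{pr:strong}, interior estimates) that the paper needs anyway for Lemma~\ref{le:ism}.
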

\begin{proof}
  It suffices to prove $\sup_X u \le 0$, as one can then replace $u$ by $-u$. (Here $\sup$ stands for the essential supremum.) Suppose the contrary. Take a constant $\delta$ such that $0 < \delta < \sup_X u$, and define
  \begin{equation} \label{eq:defv}
     v = (u - \delta)^+ - \frac{\int_X (u - \delta)^+ \omega_{\varphi}^n}{\int_X \omega_{\varphi}^n},
  \end{equation}
  in which $(u - \delta)^+ = \max\{ u- \delta, 0\}$. Then, $v \in \mathcal{H}$ and
  \[
    d v = d(u - \delta)^+ =
  \begin{cases}
    d u, & \textup{if $u > \delta$}, \\
    0,   & \textup{if $u \le \delta$}.
  \end{cases}
  \]
  Let us denote by $\Gamma$ the compact support of $dv$. Then, we obtain by \eqref{eq:weakmax} and metric equivalence of $\eta$, $\omega_{\varphi}$, that
  \[
     \| \nabla v\|^2_{L^2} = \int_{\Gamma} |\nabla v|^2 \omega_{\varphi}^n \le C \int_{\Gamma} |v| |\nabla v| \omega_{\varphi}^n.
  \]
  Here and below, we denote by $C$ a generic positive constant depending only on $\eta$, $\omega_{\varphi}$, and $n$. Apply H\"older's inequality to get
  \begin{equation} \label{eq:invholder}
      \| \nabla v \|_{L^2} \le C \| v \|_{L^2(\Gamma)}.
  \end{equation}
  On the other hand, combining the Sobolev inequality and Poincar\'e inequality yields that
  \begin{equation}\label{eq:SP}
      \| v \|_{L^{2n/(n-1)}} \le C (\| \nabla v \|_{L^2} + \| v\|_{L^2}) \le C \| \nabla v \|_{L^2}.
  \end{equation}
  Hence, by \eqref{eq:invholder} and \eqref{eq:SP},
  \[
      \| v\|_{L^{2n/(n-1)}} \le C \| v \|_{L^2(\Gamma)} \le C |\Gamma|^{\frac{1}{2n}} \| v \|_{L^{2n/(n-1)}},
  \]
  in which $|\Gamma|$ denotes the measure of $\Gamma$ with respect to $\omega_{\varphi}$. It follows that
  \begin{equation} \label{eq:low}
     |\Gamma| = |\{ u > \delta, |du| > 0\}| \ge C^{-1}.
  \end{equation}
  Letting $\delta$ tend to $\sup u$ implies that $|du|>0$ on a set of positive measure in $\{x \in X; u(x) = \sup_X u\}$, which is evidently impossible by Lemma 7.7 in Gilbarg--Trudinger~\cite[p. 152]{GT}. This proves that $\sup u \le 0$.
\end{proof}

The next two propositions are standard, for which we need the Lax--Milgram Theorem (see Evans~\cite[p. 297]{Evans}, for example) and the Fredholm alternative (see \cite[p. 641]{Evans} for example). We include them here for completeness.
\begin{LMthm}
  Let $H$ be a real Hilbert space, and $I : H \times H \to \mathbb{R}$ be a bilinear mapping. Assume that, there exist positive constants $\beta$ and $\mu$ such that
  \[
     |I(u,v)| \le \beta \|u\| \|v\|, \qquad \textup{for all $u, v \in H$},
  \]
  and
  \[
     I(v,v) \ge \mu \|v\|^2, \qquad \textup{for all $v \in H$}.
  \]
  Then, for any bounded linear functional $f$ on $H$, there exists a unique element $u \in H$ satisfying that
  \[
     I(u, v) = f(v) \qquad \textup{for all $v \in H$}.
  \]
\end{LMthm}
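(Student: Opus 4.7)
My plan is to reduce Lax--Milgram to the Riesz representation theorem by encoding the bilinear form $I$ as a bounded linear operator $A$ on $H$, and then showing that $A$ is bijective using coercivity.

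First, for each fixed $u \in H$, the map $v \mapsto I(u,v)$ is a bounded linear functional on $H$ with norm at most $\beta\|u\|$. By the Riesz representation theorem there is a unique $Au \in H$ with
\[
   I(u,v) = \langle Au, v\rangle, \qquad \text{for all } v \in H.
\]
The bilinearity of $I$ transfers to linearity of $A:H \to H$. Setting $v = Au$ and using the continuity bound for $I$ gives $\|Au\|^2 \le \beta\|u\|\|Au\|$, so $A$ is bounded with operator norm at most $\beta$.

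Next I would establish that $A$ is a bijection. Coercivity gives
\[
   \mu\|u\|^2 \le I(u,u) = \langle Au, u\rangle \le \|Au\|\|u\|,
\]
hence $\|Au\| \ge \mu\|u\|$. This at once yields injectivity and closedness of the range: if $Au_k \to w$, then $\|u_j - u_k\| \le \mu^{-1}\|Au_j - Au_k\| \to 0$, so $\{u_k\}$ converges to some $u$ with $Au = w$ by continuity. For surjectivity, I would verify $R(A)^\perp = \{0\}$: any $w$ orthogonal to $R(A)$ satisfies $I(u,w) = \langle Au, w\rangle = 0$ for all $u \in H$, and the choice $u = w$ combined with coercivity forces $\mu\|w\|^2 \le 0$, hence $w = 0$. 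Together with closedness, this gives $R(A) = H$.

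Finally, for the given bounded linear functional $f$ on $H$, Riesz representation produces $z_f \in H$ with $f(v) = \langle z_f, v\rangle$ for all $v$, and the bijectivity of $A$ yields a unique $u \in H$ solving $Au = z_f$; this $u$ is the unique element satisfying $I(u,v) = f(v)$ on $H$. The essential content beyond Riesz is the coercivity-driven closed-range plus orthogonal-complement argument, which I expect to be the main (and only really substantive) step of the proof.
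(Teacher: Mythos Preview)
Your proof is correct and follows the standard textbook argument for Lax--Milgram. However, the paper does not actually prove this theorem: it merely states it and cites Evans~\cite[p.~297]{Evans} as a reference, using it as a black box in the proof of Proposition~\ref{pr:Ka1st}. So there is no paper proof to compare against; your argument is essentially the one found in the cited reference.
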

\begin{FAthm}
  Let $E$ be a Banach space and $K : E \to E$ be a compact linear operator. Then,
  \[
     \textup{$\ker (I - K) = \{0\}$ \quad if and only if \quad $\im (I - K) = E$},
  \]
  where $I: E \to E$ is the identity operator.
\end{FAthm}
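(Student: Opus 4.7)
The plan is to reduce the biconditional to two structural facts about $T:=I-K$: first, $\operatorname{im}(T)$ is closed and of finite codimension; second, $\dim\ker(T)=\operatorname{codim}\operatorname{im}(T)$, i.e.\ the Fredholm index of $T$ vanishes. Granting these, the equivalence is immediate: $\ker(T)=\{0\}$ forces $\operatorname{codim}\operatorname{im}(T)=0$ by the index equality, and closedness upgrades this to $\operatorname{im}(T)=E$; the reverse implication is symmetric.

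I would first show $\dim\ker(T)<\infty$. Every bounded sequence in $\ker(T)$ satisfies $x_n=Kx_n$, so compactness of $K$ yields a convergent subsequence; the unit ball of $\ker(T)$ is therefore compact, and Riesz's lemma forces finite dimension. Next, I would establish closedness of $\operatorname{im}(T)$ by passing to the quotient $E/\ker(T)$ and proving that the induced injective map $\widetilde T\colon E/\ker(T)\to E$ is bounded below. If it were not, one could choose representatives $x_n$ with $\operatorname{dist}(x_n,\ker(T))=1$, $\|x_n\|\le 2$, and $Tx_n\to 0$; compactness of $K$ would give a subsequence along which $Kx_n$ converges, so $x_n=Tx_n+Kx_n$ would converge to some limit lying in $\ker(T)$, contradicting $\operatorname{dist}(x_n,\ker(T))=1$. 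Bounded-below injectivity of $\widetilde T$ makes its range, and hence $\operatorname{im}(T)$, closed.

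The heart of the argument is the index identity. I would carry out the classical Riesz ascent--descent analysis of the monotone chains $N_j:=\ker(T^j)$ and $R_j:=\operatorname{im}(T^j)$. A binomial expansion shows $T^j=I-K_j$ with $K_j$ compact, so the two preceding steps apply to $T^j$: every $N_j$ is finite-dimensional and every $R_j$ is closed. Both chains must eventually stabilise: if $N_j\subsetneq N_{j+1}$ for every $j$, Riesz's lemma produces unit vectors $x_j\in N_{j+1}$ with $\operatorname{dist}(x_j,N_j)\ge 1/2$, and a short computation shows $\{Kx_j\}$ admits no Cauchy subsequence, contradicting compactness of $K$; a symmetric argument treats $R_j$. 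At a common stabilisation index $p$ one obtains the topological direct sum $E=N_p\oplus R_p$ from the relations $N_p\cap R_p=\{0\}$ and $T(R_p)=R_p$; since $T$ acts nilpotently on the finite-dimensional $N_p$ and bijectively on $R_p$, finite-dimensional linear algebra gives $\dim\ker T=\dim\ker(T|_{N_p})=\dim\operatorname{coker}(T|_{N_p})=\operatorname{codim}\operatorname{im}T$, which is the desired index identity.

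The principal obstacle is the ascent--descent stabilisation and the resulting $N_p\oplus R_p$ decomposition. This is the step that makes the Fredholm index zero rather than merely finite, and it is where compactness of $K$ is used in a genuinely quantitative way, through Riesz's lemma applied to infinitely many nested subspaces; the remaining parts are routine bookkeeping once the decomposition is in place.
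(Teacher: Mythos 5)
Your proposal is correct, but there is nothing in the paper to compare it against: the Fredholm alternative is quoted as a known black box (with a citation to Evans) and used only to run the $L_{\gamma}^{-1}$ compact-resolvent argument in Proposition~\ref{pr:Kamain}; the paper supplies no proof of its own. What you have written is a sound outline of the classical Riesz--Schauder proof: finite-dimensionality of $\ker(I-K)$ via compactness of the unit ball plus Riesz's lemma, closedness of the range via the bounded-below quotient map, and index zero via stabilisation of the chains $\ker(T^j)$ and $\operatorname{im}(T^j)$ together with the decomposition $E=N_p\oplus R_p$. All the key uses of compactness are placed correctly (in particular the $\|Kx_j-Kx_i\|\ge 1/2$ trick in the ascent argument, and the fact that $(I-K)^j=I-K_j$ with $K_j$ compact so the preliminary steps apply to all powers). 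This route has the advantage of working in an arbitrary Banach space, exactly as the statement is phrased, whereas the proof in the cited reference (Evans, Appendix D) is carried out in a Hilbert space and leans on adjoints and orthogonal complements ($\operatorname{im}(I-K)=\ker(I-K^{*})^{\perp}$ and $\dim\ker(I-K)=\dim\ker(I-K^{*})$); for the paper's application the Hilbert-space version on $\mathcal{L}\subset L^2(X)$ would already suffice. The only places where your sketch compresses genuine content are the deduction of $E=N_p+R_p$ (which needs $R_p=R_{2p}$, i.e.\ descent stabilised by $p$, not merely $T(R_p)=R_p$) and the identification $\operatorname{coker}(T)\cong\operatorname{coker}(T|_{N_p})$; both are standard and you correctly flag them as bookkeeping, so I see no gap.
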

\begin{prop} \label{pr:Ka1st}
  There exists a nonnegative constant $\gamma$, depending on $\omega_{\varphi}$ and $\eta$, such that for any $h \in \mathcal{L}$, there exists a unique weak solution $u \in \mathcal{H}$ of
  \begin{equation} \label{eq:Ka1stweak}
     - L_{\gamma}u \equiv - L u + \gamma u = h.
  \end{equation}
  That is, the function $u$ satisfies
  \begin{equation} \label{eq:Ka1st}
     A (u, v) + \gamma \langle u, v\rangle_{L^2} = \langle h, v \rangle_{L^2}, \qquad \textup{for all $v \in \mathcal{H}$}.
  \end{equation}
\end{prop}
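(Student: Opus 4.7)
The plan is to apply the Lax--Milgram Theorem to the bilinear form
\[
   B_\gamma(u,v) := A(u,v) + \gamma \langle u,v \rangle_{L^2}
\]
on the Hilbert space $\mathcal{H}$ (which is closed in $W^{1,2}(X)$ because the mean against $\omega_\varphi^n$ is a bounded linear functional on $L^2(X) \supset W^{1,2}(X)$). Once the hypotheses of Lax--Milgram are verified, the bounded linear functional $v \mapsto \langle h, v\rangle_{L^2}$ on $\mathcal{H}$ (bounded by Cauchy--Schwarz) produces the unique $u \in \mathcal{H}$ satisfying \eqref{eq:Ka1st}, which is exactly the weak form of \eqref{eq:Ka1stweak}.

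Continuity of $B_\gamma$ is routine. In the first term of $A$, the form $\eta^{n-2} \wedge \omega_\varphi$ is a positive $(n-1,n-1)$--form with coefficients bounded uniformly on the compact $X$, and $\sqrt{-1}\,\partial u \wedge \bar{\partial} v + \sqrt{-1}\,\partial v \wedge \bar{\partial} u$ is pointwise controlled by $|\nabla u|\,|\nabla v|$ (in any fixed background metric). In the second term, $\partial \omega_\varphi$ and $\bar{\partial} \omega_\varphi$ are bounded on $X$, giving an estimate of the form $C \int_X |v|\,|\nabla u|\,\omega_\varphi^n$. Combined with $\gamma \|u\|_{L^2}\|v\|_{L^2}$ and Cauchy--Schwarz, one obtains a bound $|B_\gamma(u,v)| \le \beta \|u\|_{W^{1,2}} \|v\|_{W^{1,2}}$.

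Coercivity is the main technical step, and the one that forces $\gamma>0$. Because $u$ is real, $\sqrt{-1}\,\partial u \wedge \bar{\partial} u \ge 0$ as a $(1,1)$--form, so wedging against the positive $(n-1,n-1)$--form $\eta^{n-2} \wedge \omega_\varphi$ yields a nonnegative top form. Equivalence of the metrics $\eta$ and $\omega_\varphi$ on the compact manifold $X$ then gives
\[
   \tfrac{n\sqrt{-1}}{2(n-1)} \int_X \eta^{n-2} \wedge \omega_\varphi \wedge \partial v \wedge \bar{\partial} v \;\ge\; c_1 \|\nabla v\|_{L^2}^2
\]
for some $c_1 > 0$ depending on $\eta$ and $\omega_\varphi$. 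For the second, "lower order" term, the bound $|\partial \omega_\varphi|, |\bar{\partial}\omega_\varphi| \le C$ and Young's inequality give, for any $\epsilon > 0$,
\[
   \left| \tfrac{n\sqrt{-1}}{2(n-1)} \int_X v\, \eta^{n-2} \wedge \big(\partial v \wedge \bar{\partial}\omega_\varphi + \partial \omega_\varphi \wedge \bar{\partial} v\big) \right| \le \epsilon \|\nabla v\|_{L^2}^2 + \frac{C_\epsilon}{} \|v\|_{L^2}^2.
\]
Choosing $\epsilon = c_1/2$ and then choosing $\gamma$ larger than the resulting constant $C_{\epsilon}$, we obtain
\[
   B_\gamma(v,v) \;\ge\; \tfrac{c_1}{2}\|\nabla v\|_{L^2}^2 + \|v\|_{L^2}^2 \;\ge\; \mu \|v\|_{W^{1,2}}^2,
\]
with $\mu := \min(c_1/2, 1) > 0$. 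This gives the required coercivity on $\mathcal{H}$.

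The main obstacle, as the above indicates, is the first-order term in $A$ coming from $d\omega_\varphi$, which is present precisely because $\omega_\varphi$ is Hermitian rather than K\"ahler; indeed if $d\omega_\varphi = 0$ the form $A$ reduces to a purely Dirichlet-type energy and one could take $\gamma = 0$. The constant $\gamma$ is built to absorb this first-order perturbation, and the final value of $\gamma$ depends only on the bounds $|\nabla \omega_\varphi|_\eta$ and the metric equivalence constants between $\eta$ and $\omega_\varphi$, as claimed.
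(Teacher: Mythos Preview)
Your proof is correct and follows exactly the same approach as the paper: verify boundedness and coercivity of $B_\gamma(u,v) = A(u,v) + \gamma\langle u,v\rangle_{L^2}$ on $\mathcal{H}$ using the metric equivalence of $\eta$ and $\omega_\varphi$, then apply the Lax--Milgram Theorem. The paper's proof is terser (it simply asserts the two estimates and invokes Lax--Milgram), whereas you spell out the Young-inequality step that produces $\gamma$; apart from the stray empty denominator in $\frac{C_\epsilon}{}$, there is nothing to add.
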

\begin{proof}
 We have, by the metric equivalence of $\eta$ and $\omega_{\varphi}$,
 \[
    |A(u,v)| \le \beta \|u\|_{W^{1,2}} \|v \|_{W^{1,2}},
 \]
 and
 \[
    A(u,u) + \gamma \|u\|_{L^2} \ge \mu \|u\|_{W^{1,2}}.
 \]
 Here $\beta> 0$, $\gamma \ge 0$, and $\mu > 0$ are constants depending only on $\eta$ and $\omega_{\varphi}$. The result then follows from applying Lax--Milgram Theorem to
 \[
    I (u, v) = A(u, v) + \gamma \langle u , v\rangle_{L^2}, \qquad \textup{for all $u, v\in \mathcal{H}$}.
 \]
\end{proof}

\begin{prop} \label{pr:Kamain}
  For any $h \in \mathcal{L}$, there exists a unique weak solution $u \in \mathcal{H}$ of
  \[
     - Lu = h.
  \]
\end{prop}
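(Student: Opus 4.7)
The plan is to apply the Fredholm alternative, using Proposition~\ref{pr:Ka1st} to set up a compact inverse of the shifted operator, and then to reduce existence for $-L$ to the uniqueness supplied by Proposition~\ref{pr:weakmax}.

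First I would define the solution operator $T : \mathcal{L} \to \mathcal{H}$ by $Th = u$, where $u \in \mathcal{H}$ is the unique weak solution of $-L_\gamma u = h$ furnished by Proposition~\ref{pr:Ka1st}; the Lax--Milgram estimate in the proof of that proposition shows $T$ is bounded. On the compact manifold $X$, the Rellich--Kondrachov theorem gives a compact embedding $\mathcal{H} \hookrightarrow \mathcal{L}$, so viewed as an operator $\mathcal{L} \to \mathcal{L}$, the map $\gamma T$ is compact.

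Next, observe that $u \in \mathcal{H}$ is a weak solution of $-Lu = h$ if and only if it is a weak solution of $-L_\gamma u = h + \gamma u$, which in turn is equivalent to the fixed-point equation $u = T(h + \gamma u)$ in $\mathcal{L}$, i.e., $(I - \gamma T) u = Th$. By the Fredholm alternative, this is solvable for every right-hand side provided $\ker (I - \gamma T) = \{0\}$. If $u \in \mathcal{L}$ satisfies $u = \gamma T u$, then $u$ automatically lies in $\mathcal{H}$ (since $Tu \in \mathcal{H}$), and unraveling the definitions gives $-L_\gamma u = \gamma u$, hence $-Lu = 0$ in the weak sense. Proposition~\ref{pr:weakmax} then forces $u = 0$, proving existence. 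Uniqueness is immediate from the same application of Proposition~\ref{pr:weakmax} to the difference of two weak solutions of $-Lu = h$.

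The only bookkeeping point to watch is that we stay consistently inside the zero-mean spaces $\mathcal{L}$ and $\mathcal{H}$ rather than in $L^2(X)$ and $W^{1,2}(X)$: one checks that $h + \gamma u$ has vanishing $\omega_\varphi^n$-mean whenever $h$ and $u$ do, so that Proposition~\ref{pr:Ka1st} can legitimately be invoked at each step. Beyond this, no serious difficulty arises -- once the compact operator $\gamma T$ is in place, the Fredholm--plus--maximum-principle argument is entirely routine.
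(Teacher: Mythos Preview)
Your proof is correct and follows essentially the same route as the paper: define the solution operator $T = L_\gamma^{-1}$ from Proposition~\ref{pr:Ka1st}, use Rellich to make $\gamma T$ compact on $\mathcal{L}$, rewrite $-Lu = h$ as $(I - \gamma T)u = Th$, and then invoke the Fredholm alternative together with Proposition~\ref{pr:weakmax} to kill the kernel. The only addition is your explicit remark about staying inside the zero-mean spaces, which the paper leaves implicit.
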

\begin{proof}
  By Proposition~\ref{pr:Ka1st} we can define a map $L_{\gamma}^{-1}: \mathcal{L} \to \mathcal{H}$ as follows: For each $f \in \mathcal{L}$, we define $L_{\gamma}^{-1}(f)$ to be the unique function $w \in \mathcal{H}$ satisfying
  \[
     A(w, v) + \gamma \langle w, v \rangle_{L^2} = \langle f, v \rangle_{L^2}.
  \]
  Clearly, $L_{\gamma}^{-1}$ is linear, and is a compact operator from $\mathcal{L}$ to $\mathcal{L}$, in view of Rellich Theorem.
  To prove the result, it suffices to show that, for a given $h \in \mathcal{L}$, there exists a unique $u \in \mathcal{L}$ satisfying that
  \[
     u = L_{\gamma}^{-1}( h + \gamma u).
  \]
  Equivalently, we need to solve a unique $u \in \mathcal{L}$ for the following equation:
  \[
     (I - \gamma L_{\gamma}^{-1})u = L^{-1}_{\gamma} h.
  \]
  To invoke the Fredholm alternative, we turn to the kernel of $(I - \gamma L_{\gamma}^{-1})$ in $\mathcal{L}$, i.e.,
  \[
     \{u \in \mathcal{L} ; \, u - \gamma L_{\gamma}^{-1} u = 0\}. 
  \]
  This is equivalent to investigate the function $u \in \mathcal{H}$ such that
  \[
     A(u, v) = 0 \qquad \textup{for all $v \in \mathcal{H}$}.
  \]
  By Proposition~\ref{pr:weakmax}, $u = 0$. The result then follows from the Fredholm alternative.
\end{proof}

Now we are in a position to prove Lemma~\ref{le:ism}:
\begin{pflemism}
The uniqueness of \eqref{eq:KaL} is an immediate consequence of Proposition~\ref{pr:Kamain}, since a $C^2$ solution of \eqref{eq:KaL} is in particular a weak solution of $- L u = - h$.

Given $h \in C^{k,\alpha}(X)$, we have $h \in W^{k,2}(X)$, since $X$ is compact. Then, by Proposition~\ref{pr:Kamain}, equation~\eqref{eq:KaL} has a weak solution $u \in W^{1,2}(X)$. Then, we obtain
\[
        u \in W^{k+2,2}(X),
\]
by the local regularity theorem (see, for example, Evans~\cite[p. 314]{Evans} or Gilbarg--Trudinger~\cite[p. 186]{GT}).
Since $k \ge n + 4$, $k - 2n/2 -1 \ge 3$. We apply the Sobolev imbedding theorem to obtain that
\[
    u \in C^{3}(X).
\]
By Proposition~\ref{pr:strong}, $u$ is the classical solution for \eqref{eq:KaL}.
It follows from the bootstrap argument (\cite[p. 109]{GT}) that
\[
   u \in C^{k+2,\alpha}(X).
\]
\qed
\end{pflemism}

\end{document}